\newtheorem{theorem}{Theorem}
\newtheorem{lemma}{Lemma}
\newcommand{\bbN}{\mathbb{N}}
\renewcommand{\epsilon}{\varepsilon}
\def\thm@space@setup{%
  \thm@preskip=1cm plus 1cm minus 1cm
  \thm@postskip=\thm@preskip
}
\author{Nicholas Bhattacharya \thanks{University of California, Berkeley.
{\tt nick\_bhat@berkely.edu}}
\and Mark Perlman \thanks{Stanford University. {\tt mpp@stanford.edu}}
}
\title{Time-Inhomogeneous Branching Processes Conditioned on Non-Extinction
}
\date{}                                           % Activate to display a given date or no date
\begin{document}

\maketitle

\begin{abstract}
  In this paper, we consider time-inhomogeneous branching processes and
  time-inhomogeneous birth-and-death processes, in which the offspring distribution and birth and death
  rates (respectively) vary in time. A classical result of branching processes states that
  in the critical regime, a process conditioned on non-extinction and normalized will converge
  in distribution to a standard exponential. In a paper of Jagers \cite{Jagers},
  time-inhomogeneous branching processes are shown to exhibit this convergence as well.
  In this paper, the hypotheses of Jagers' result are relaxed, further hypotheses
  are presented for convergence in moments, and the result is extended to the
  continuous-time analogue of time-inhomogeneous birth-and-death processes.
  In particular, the new hypotheses suggest a simple characterization of the critical regime.
\end{abstract}

\textbf{Keywords:} Branching processes, Birth-and-death processes, Time-inhomogeneous,
Time-independent, Non-extinction

\textbf{Mathematics Subject Classification (2000):} 60J80

\textit{This work was started at the MAPS-REU
led by Prof. D. Dolgopyat and Prof. L. Koralov at the University of Maryland in the summer of 2015.}

\section{Introduction} %%%%%%%%%%%%%%%%%%%%%%%%%%%%
\label{intro}

A branching process is a sequence of random variables $\{Z_{n}\}$ that represents a population of particles
at times steps $n \in \bbN$, so that $Z_{n}$ is the population of the $n$-th generation of particles. 
The initial population is one particle, $Z_{0}=1$, and subsequent generations are defined inductively.
To create the generation $Z_{n+1}$, each of the $Z_{n}$ particles reproduces independently
according to a given offspring distribution, and the sum of the resulting particles quantifies the
next generation $Z_{n+1}$. In classical, time-homogeneous branching processes, the offspring
distribution $X$ is common to all particles. In time-inhomogeneous branching processes, the
offspring distribution $X_{n}$ depends on the time step $n$.
Thus a time-inhomogeneous branching process is defined by
  \begin{align*}
  \begin{cases}
    Z_{0} = 1 \\
    Z_{n+1} = \sum_{j=1}^{Z_{n}}X_{n,j}, \qquad X_{n,j} \text{ are i.i.d. copies of } X_{n}.
  \end{cases}
  \end{align*}
Classical branching processes are categorized into three regimes depending on the expected number of offspring $E(X)$:
a branching process is subcritical if $E(X)<1$, critical if $E(X)=1$ and supercritical if $E(X)>1$. Two important results
pertaining to the asymptotics of classical branching processes can be found in \cite{AN}: the first states
that extinction (the event that $Z_{n}$ is ever 0) occurs with probability
one if and only if $E(X) \leq 1$; the second states that when a branching process in the critical regime is conditioned on
non-extinction and normalized, it converges in distribution to a standard exponential:
  \begin{align*}
    \frac{\zeta_{n}}{E(\zeta_{n})} \to \exp(1),
  \end{align*}
where $\zeta_{n}$ denotes $\{Z_{n} \mid Z_{n} \ne 0\},$ that is, $Z_{n}$ conditioned on non-extinction.
These results hold in the time-inhomogeneous setting, though under more complicated hypotheses. As discussed in \cite{Agresti}, extinction occurs with probability one if and only if
  \begin{align*}
    \sum_{n=0}^{\infty} \frac{1}{E(Z_{n})} = \infty,
  \end{align*}
under some natural regularity conditions on the offspring distribution which are thoroughly discussed in \cite{Jirina}.
It follows from this result that the above equation naturally defines the subcritical regime for time-inhomogeneous
branching processes.

The exponential limit of the conditioned, normalized process has been proven in the time-inhomogeneous setting by
Jagers \cite{Jagers}, under the main hypothesis that
  \begin{align*}
    0 < \lim_{n \to \infty }E(Z_{n}) < \infty,
  \end{align*}
along with regularity conditions. In this paper, a strictly weaker set of hypotheses is shown to be sufficient for the
exponential limit, thus also providing a broader definition of the critical regime.

In the second part of this paper, the two asymptotic results mentioned above (extinction criterion and exponential
limit) are reproved in the setting of time-inhomogeneous
birth-and-death processes. A time-inhomogeneous birth-and-death process $Z_{t}$ is a continuous-time analogue
of a time-inhomogeneous branching process, in which births and deaths occur at random times determined by
the current total population and in terms of infinitesimal rates:
  \begin{align*}
  \begin{cases}
    P(Z_{t+\Delta}-Z_{t}= k \mid Z_{t}) = Z_{t}\Delta b_{k}(t) + o(\Delta), & (k = -1, 1, ..., n) \\
      P(Z_{t+\Delta}-Z_{t}= 0 \mid Z_{t}) = 1-Z_{t}\Delta(b_{-1}(t)+b_{1}(t)+...+b_{n}(t)) + o(\Delta).
  \end{cases}
  \end{align*}
The rates $b_{k}$ are measurable functions defined on $[0,\infty)$.
Birth-and-death processes can be considered a continuous-time analogue to 
branching processes, wherein individual particles reproduce or die at distinct, continuous, random times rather
than reproducing all at once at every integer time.
The birth rates $b_{1}(t),...,b_{n}(t)$ and the death rate $b_{-1}(t)$ replace the offspring distribution
$X_{n}$ of the discrete case. A noteworthy difference between the discrete and continuous
models is that in the continuous model, the population can only increase by a fixed maximum amount
at any reproduction event, whereas the support of the discrete $X_{n}$ could be all of $\bbN$.

In \cite{CM}, extinction is shown to occur with probability one under the condition
  \begin{align*}
    \int_{0}^{\infty}\frac{1}{E(Z_{t})}dt = \infty.
  \end{align*}
In \cite{Kendall}, both asymptotic results are shown in the special ``fractional linear'' case when there is only
the single birth rate $b_{1}$. In this paper we will extend the results to the case of finitely many birth rates.

For the remainder of the paper, the discrete setting is dealt with in its entirety before the continuous setting.

\section{The Discrete Setting} %%%%%%%%%%%%%%%%%%%%%%%%%%%%
\label{The Discrete Setting}

\subsection{Notation} %%%%%%%%%%%%%%%
\label{Notation}
We denote $g_{n}(s) = E(s^{X_{n}})$ for $s \in [0,1]$, the generating function of the $n$th offspring distribution,
and we denote $f_{n}(s) = E(s^{Z_{n}})$ for $s \in [0,1]$, the generating function of the $n$th generation distribution.
As in the case of classical branching processes, the generating function for the population can be obtained by
composing generating functions of the offspring distributions:
  \begin{align*}
    f_{n}(s) = (g_{0} \circ ... \circ g_{n-1})(s).
  \end{align*}
We'll also make use of composing only some generating functions of the offspring distributions, to obtain the
generating function for $\{Z_{n} \mid Z_{j}=1\}$, a process starting with a single particle at the $j$th generation:
  \begin{align*}
    f_{jn}(s) = (g_{j} \circ g_{j+1} \circ ... \circ g_{n-1})(s).
  \end{align*}
We denote $\phi_{n} = P(Z_{n} \ne 0)$, the probability of non-extinction by the $n$th generation, and analogously
we denote $\phi_{jn} = P(Z_{n} \ne 0 \mid Z_{j}=1)$, the probability of non-extinction by the $n$th generation
if we had started the process with one particle at the $j$th generation. We denote 
the means of the offspring distribution and the population distribution by
  \begin{align*}
    \mu_{n} & = E(X_{n}) = g_{n}'(1) \\
    m_{n} & = E(Z_{n}) = f_{n}'(1).
  \end{align*}
Our expression of $f_{n}$ in terms of the $g_{n}$ tells us that the $n$th population mean is the
product of the preceding generation means:
  \begin{align*}
    m_{n} = \prod_{i=0}^{n-1}\mu_{i}.
  \end{align*}
We will more generally denote the moments and factorial moments of the population distribution by
  \begin{align*}
    M_{n,r} & = E(Z_{n}^{r}) \\
    F_{n,r} & = f_{n}^{(r)}(1).
  \end{align*}
We will denote by $\zeta_{n}$ the process $Z_{n}$ conditioned on non-extinction:
  \begin{align*}
    \zeta_{n} = \{Z_{n} \mid Z_{n} \ne 0\}.
  \end{align*}
Finally, we introduce the quantity
  \begin{align*}
    \Gamma_{n} & = \sum_{j=0}^{n-1}\frac{g_{j}''(1)}{2\mu_{j}m_{j+1}},
  \end{align*}
which we will show is asymptotically equal to $\phi_{n}^{-1}$ in the critical regime.

\subsection{Hypotheses} %%%%%%%%%%%%%%%
\label{Hypotheses}

We'll make use of the following hypotheses:
\begin{enumerate}
  \item[(H1)] $\sup_{n \geq 0} P(X_{n} = 0) < 1$
  \item[(H2)] $\limsup_{n \to \infty} \left[P(X_{n}=0)+P(X_{n}=1)\right] < 1$
  \item[(H3)] $\liminf_{n \to \infty} P(X_{n}=0)  > 0$
  \item[(H4a)] $\sup_{n \geq 0} E(X_{n}^{r}) < \infty$ for $r=1,2,3$
  \item[(H4b)] $\sup_{n \geq 0} E(X_{n}^{r}) < \infty$ for every $r \geq 1$
  \item[(H5)] $\Gamma_{n} \to \infty$
  \item[(H6)] $m_{n}\Gamma_{n} \to \infty$.
\end{enumerate}
Hypotheses (H1)-(H4) are the regularity conditions on the offspring distribution mentioned before.
Hypothesis (H1) prevents the branching process from entirely dying out in a single generation.
Hypothesis (H2) prevents the possibility of every particle generating almost exactly one in the next generation,
thereby causing the process to stagnate or simply stop. Hypotheses (H1) and (H2) can be replaced by the
slightly stronger assumption
  \begin{align*}
    \sup_{n \geq 0} \left[P(X_{n} = 0)+P(X_{n}=1)\right] < 1,
  \end{align*}
which does not allow any generations where $P(X_{n}=1)$ is very close to 1, i.e. generations where almost
nothing happens. Hypothesis (H3) prevents the process from tending toward a pure birth process.
Hypotheses (H4a), an upper bounds on the first three moments, will be sufficient for convergence in distribution;
an upper bound on all moments, (H4b), will be necessary for convergence in moments.

Hypotheses (H5) and (H6) are our new criteria for the critical regime.
Hypothesis (H5) places the process out of the supercritical regime, into either the subcritical or critical regimes,
and hypothesis (H6) places the process out of the subcritical regime, into either the critical or supercritical regimes.
Hypothesis (H5) reduces to the familiar condition
  \begin{align*}
    \sum_{n=0}^{\infty} \frac{1}{E(Z_{n})} = \infty
  \end{align*}
when given control on the first two moments of the offspring distribution:
  \begin{align*}
    0 < \inf_{n \geq 0} E(X_{n}^{r}) \leq \sup_{n \geq 0}E(X_{n}^{r}) < \infty \qquad \text{for } r=1,2.
  \end{align*}
While it might be tempting to work with the tidier quantity
  \begin{align*}
    \sum_{j=0}^{n-1}\frac{1}{m_{j+1}} = \sum_{j=1}^{n}\frac{1}{E(Z_{j})},
  \end{align*}
the forthcoming result that $\Gamma_{n}\phi_{n} \to 1$ indicates that $\Gamma_{n}$ is the correct quantity
to consider.

On necessity: the process must go extinct with probability one for the conditioned, scaled process to tend
towards an exponential limit. Without this property, even a classical branching process will not exhibit
the same limiting behavior in the supercritical regime. Modulo some regularity conditions, hypothesis (H5)
ensures extinction. For hypothesis (H6), we rewrite the scaling factor via:
  \begin{align*}
    m_{n} & = E(Z_{n}) \\
    & = E(Z_{n} \mid Z_{n} = 0)P(Z_{n}=0)+E(Z_{n} \mid Z_{n} \ne 0)P(Z_{n} \ne 0) \\
    & = E(Z_{n} \mid Z_{n} \ne 0)\phi_{n}.
  \end{align*}
Thus, 
  \begin{align*}
    \frac{\zeta_{n}}{E(Z_{n} \mid Z_{n} \ne 0)} = \frac{\zeta_{n}}{m_{n}/\phi_{n}}.
  \end{align*}
Once we show $\phi_{n}\Gamma_{n} \to 1$, hypothesis (H6) will say that the denominator in the
conditioned, scaled process tends to infinity, which is certainly necessary if we are to take a discrete
process $\zeta_{n}$ to a continuous limit $\exp(1)$. Though we do not explore it here,
it is interesting to ask whether and under what hypotheses the conditioned, scaled process tends to
a geometric limit in an appropriately defined subcritical regime.

On improvement: compare (H5) and (H6) with Jagers' hypothesis
  \begin{align*}
    0 < \lim_{n \to \infty} E(Z_{n}) < \infty.
  \end{align*}
Jagers' hypothesis implies (H5) and (H6), and (H5) and (H6) do not imply Jagers' hypothesis.
Here we construct a simple example: let $P(X_{0}=1)=1$, and for $n \geq 1$ let
  \begin{align*}
    P(X_{n}=2) = \frac{1}{2}\frac{n+1}{n}, \qquad P(X_{n}=0) = 1-P(X_{n}=2).
  \end{align*}
Then $\mu_{n}=\frac{n+1}{n}$ and so $m_{n} = n$. This process satisfies (H1)-(H6), but
does not have bounded means. Examples can be constructed similarly to attain any
polynomial growth or decay of $m_{n}$, and such a process will satisfy (H5) and (H6) as
long as $m_{n}$ experiences polynomial decay, linear growth, or anything in between.

In addition to hypotheses (H1)-(H6), we will make use of an estimate due to Agresti \cite{Agresti}, Lemmas 1 and 2:
If $E(X_{n}^{2}) < \infty$ for each $n$ (which is weaker than (H4a) or (H4b)), then
  \begin{align*}
    \left(\frac{1}{(1-s)m_{n}} + \Gamma_{n}\right)^{-1} \leq 1-f_{n}(s) \leq 
      \left(\frac{1}{(1-s)m_{n}} + \sum_{j=0}^{n-1}\frac{g_{j}''(0)}{2\mu_{j}m_{j+1}}\right)^{-1}.
  \end{align*}
A sharper upper bound is permitted by Agresti's argument:
  \begin{equation}
    \left(\frac{1}{(1-s)m_{n}} + \Gamma_{n}\right)^{-1} \leq 1-f_{n}(s)
      \leq \left(\frac{1}{(1-s)m_{n}} + \sum_{j=0}^{n-1} \frac{g_{j}''(f_{jn}(0))}{2\mu_{j}m_{j+1}}\right)^{-1}.
  \end{equation}
Evaluating at $s=0$ leads to
  \begin{equation}
    \left(\frac{1}{m_{n}} + \Gamma_{n}\right)^{-1} \leq \phi_{n}
      \leq \left(\frac{1}{m_{n}} + \sum_{j=0}^{n-1} \frac{g_{j}''(f_{jn}(0))}{2\mu_{j}m_{j+1}}\right)^{-1}.
  \end{equation}

\subsection{Results} %%%%%%%%%%%%%%%%%%
\label{Results}

The main theorem we'll prove is the following:

\begin{theorem}
  If (H1)-(H6) hold, then
    \begin{align*}
      \frac{\zeta_{n}}{E(\zeta_{n})} \to \exp(1),
    \end{align*}
  where the convergence is in distribution assuming (H4a), and in moments assuming (H4b).
\end{theorem}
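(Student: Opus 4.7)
The plan is to drive both convergences from Agresti's two-sided estimate (1) on $1-f_n(s)$. The key preliminary fact is the asymptotic $\phi_n \Gamma_n \to 1$ announced in the text: setting $s=0$ in the lower Agresti bound gives immediately $\phi_n \Gamma_n \geq 1/(1 + 1/(m_n \Gamma_n)) \to 1$ by (H6), and for the matching upper bound we must show that the modified sum $\tilde{\Gamma}_n := \sum_{j=0}^{n-1} g_j''(f_{jn}(0))/(2\mu_j m_{j+1})$ is asymptotic to $\Gamma_n$. Since $g_j''$ is monotone on $[0,1]$ we have $\tilde{\Gamma}_n \leq \Gamma_n$ already; for the reverse direction, a mean value estimate gives $g_j''(1) - g_j''(f_{jn}(0)) \leq \phi_{jn} g_j'''(1) \leq \phi_{jn} E(X_j^3)$, bounded uniformly in $j$ by (H4a). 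Splitting the sum at an index $k_n$ with $k_n \to \infty$ and $n - k_n \to \infty$ lets the early block benefit from $f_{jn}(0) \to 1$ while the tail block contributes only a vanishing fraction of $\Gamma_n$ by (H5); hypotheses (H1)--(H3) keep the subprocess extinction probabilities $\phi_{jn}$ well-behaved.

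\textbf{Convergence in distribution under (H4a).} I would substitute $s_n = \exp(-\lambda/E(\zeta_n)) = \exp(-\lambda \phi_n/m_n)$ into the identity $E(s^{\zeta_n}) = 1 - (1-f_n(s))/\phi_n$. Taylor expansion gives $(1-s_n) m_n = \lambda \phi_n (1+o(1))$, hence $1/((1-s_n) m_n) = (1+o(1))/(\lambda \phi_n)$. Substituting into Agresti's estimate and dividing through by $\phi_n$ sandwiches $(1-f_n(s_n))/\phi_n$ between two expressions, each of shape $(\lambda^{-1} + \phi_n \Gamma_n + o(1))^{-1}$ (with $\tilde{\Gamma}_n$ on the upper side), both of which tend to $\lambda/(1+\lambda)$ by the previous step. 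Therefore $E(\exp(-\lambda \zeta_n/E(\zeta_n))) \to 1/(1+\lambda)$, which is the Laplace transform of $\exp(1)$, and the continuity theorem for Laplace transforms of nonnegative random variables concludes convergence in distribution.

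\textbf{Convergence in moments under (H4b).} Since $\exp(1)$ is moment-determinate, it suffices to show that $E((\zeta_n/E(\zeta_n))^r)$ is bounded in $n$ for every $r \geq 1$, in which case distributional convergence automatically upgrades to convergence in moments. Differentiating $E(s^{\zeta_n}) = 1 - (1-f_n(s))/\phi_n$ at $s=1$ expresses the $r$-th factorial moment of $\zeta_n$ as $F_{n,r}/\phi_n$, so the task reduces to bounding $F_{n,r} \phi_n^{r-1}/m_n^r$. I would proceed by induction on $r$ using the recursion $f_n = f_{n-1} \circ g_{n-1}$ together with Fa\`a di Bruno's formula at $s=1$, which expresses $F_{n,r}$ as a polynomial in $F_{n-1,k}$ (for $k \leq r$) and in the factorial moments of $X_{n-1}$, all finite by (H4b). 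The hard part, and the main obstacle of the proof, is the bookkeeping in this induction: one must show the Fa\`a di Bruno cross terms contribute only lower-order corrections relative to the principal term $\mu_{n-1}^r F_{n-1,r}$, so that $F_{n,r} \phi_n^{r-1}/m_n^r$ not only stays bounded but in fact converges to $r!$, matching the $r$-th moment of $\exp(1)$ and completing the strengthened convergence.
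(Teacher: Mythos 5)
Your overall architecture coincides with the paper's: the Laplace-transform sandwich obtained by substituting $s_n=\exp(-\lambda\phi_n/m_n)$ into Agresti's estimate for convergence in distribution, and the factorial-moment induction on the recursion $f_{n+1}=f_n\circ g_n$ (the paper's Theorem 4) for convergence in moments. Your reduction of the moment claim to ``bounded moments plus moment-determinacy'' is a harmless variant of the paper's direct computation, which converts $M_{n,r}$ to factorial moments via Stirling numbers and shows $F_{n,r}/(m_n^r\Gamma_n^{r-1})\to r!$; either way the same Fa\`a di Bruno bookkeeping must be carried out, and you correctly identify it as the main labor.

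The genuine gap is in your preliminary step, the claim $\tilde\Gamma_n\sim\Gamma_n$ (equivalently $\phi_n\Gamma_n\to1$, the paper's Theorem 3), on which both halves of your argument lean. Splitting the sum at an arbitrary index $k_n$ with $k_n\to\infty$ and $n-k_n\to\infty$ does not close: (i) for the early block you need $\sup_{j<k_n}\phi_{jn}\to 0$, and the pointwise fact that $f_{jn}(0)\to1$ for each fixed $j$ gives no such uniformity over a growing range of $j$; (ii) for the tail block, (H5) says only that $\Gamma_n\to\infty$ and provides no control of $\Gamma_n-\Gamma_{k_n}$ relative to $\Gamma_n$. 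The paper resolves both difficulties at once by cutting adaptively at $J(n)=\min\{j:\phi_{jn}>\epsilon\eta/L\}$, so that the early block has $\phi_{jn}$ small by definition, and then controls the tail by applying the extinction upper bound to the process restarted at time $j$, which yields the quantitative estimate $\phi_{jn}\le C/\bigl(m_j(\Gamma_n-\Gamma_j)\bigr)$. Evaluated at $j=J(n)$ and combined with (H6) --- not (H5) --- this gives $(\Gamma_n-\Gamma_{J(n)})/\Gamma_n\le CL/(\epsilon\eta\,m_{J(n)}\Gamma_{J(n)})\to0$ since $J(n)\to\infty$. This survival bound for the time-shifted process is the missing idea; without it the sandwich in your first paragraph, and hence both convergence statements, remains open.
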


We will prove Theorem 1 with (H4a) by showing that the moment generating function converges to that
of an exponential, and we will prove Theorem 1 with (H4b) by showing that
the $r$-th moment of the process tends to $r!$.
The proofs rely on two technical lemmas and three smaller theorems.

\begin{theorem}
  If (H1)-(H6) hold with (H4a), then extinction occurs with probability one.
\end{theorem}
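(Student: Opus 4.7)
The plan is to apply the sharpened Agresti upper bound (equation~(2) in the excerpt),
\[
\phi_n \leq \left(\frac{1}{m_n} + \sum_{j=0}^{n-1}\frac{g_j''(f_{jn}(0))}{2\mu_j m_{j+1}}\right)^{-1},
\]
and show directly that the sum in the denominator tends to infinity. The key observation is that the arguments $f_{jn}(0)$ are uniformly bounded away from zero once $j$ is large: writing $f_{jn} = g_j \circ f_{j+1,n}$ gives $f_{jn}(0) \geq g_j(0) = P(X_j = 0)$, so (H3) supplies some $\delta > 0$ and an index $J_0$ with $f_{jn}(0) \geq \delta$ for every $j \geq J_0$ and every $n > j$.

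Next I would use (H2) and (H4a) to bound $g_j''(s)$ below by a positive multiple of $g_j''(1)$, uniformly in $s \in [\delta, 1]$ and $j$ large. By (H2), $g_j''(1) = E(X_j(X_j-1)) \geq 2P(X_j \geq 2) \geq c_0$ for large $j$; by (H4a), $C := \sup_j E(X_j^3) < \infty$. Taking $M$ large enough that $C/M \leq c_0/2$, Markov's inequality gives $E(X_j^2\,\mathbf{1}_{X_j > M}) \leq E(X_j^3)/M \leq c_0/2$, whence
\[
g_j''(s) \;\geq\; s^{M-2}\sum_{k=2}^{M} k(k-1)P(X_j=k) \;\geq\; s^{M-2}\bigl(g_j''(1) - c_0/2\bigr) \;\geq\; \tfrac{s^{M-2}}{2}\,g_j''(1)
\]
for $s \in [0,1]$ and $j$ large (the last step uses $g_j''(1) \geq c_0$). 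Setting $\kappa := \delta^{M-2}/2 > 0$ then yields $g_j''(f_{jn}(0)) \geq \kappa\,g_j''(1)$ for all $j \geq J_0$ and $n > j$.

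Summing this estimate from $j = J_0$ to $j = n-1$ and invoking (H5),
\[
\sum_{j=0}^{n-1} \frac{g_j''(f_{jn}(0))}{2\mu_j m_{j+1}} \;\geq\; \kappa\,\bigl(\Gamma_n - \Gamma_{J_0}\bigr) \;\longrightarrow\; \infty,
\]
so the Agresti bound forces $\phi_n \to 0$, and extinction occurs with probability one. The main obstacle is the uniform comparison $g_j''(f_{jn}(0)) \geq \kappa\, g_j''(1)$: it requires all three regularity hypotheses working together---(H3) keeps $f_{jn}(0)$ away from $0$, (H2) keeps $g_j''(1)$ away from $0$, and (H4a) controls the upper tail of $X_j$ via the truncation argument. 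I expect (H6) to play no role in this theorem and only to enter in the distributional limit.
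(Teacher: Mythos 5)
Your proof is correct and follows essentially the same route as the paper: the Agresti upper bound (2) combined with a uniform comparison $g_j''(f_{jn}(0)) \geq \kappa\, g_j''(1)$ for large $j$, which is precisely the content of the paper's Lemmas 1 and 2, and then (H5) to send the denominator to infinity. Your two shortcuts---bounding $f_{jn}(0) \geq g_j(0) = P(X_j=0)$ directly rather than via the paper's two-case composition argument, and controlling the tail of $g_j''$ by a third-moment Markov truncation instead of a first-moment one---both work, and you are right that (H6) is not needed for extinction alone (the paper's proof invokes it only to obtain the stronger two-sided bound on $\phi_n\Gamma_n$).
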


Theorem 2 combines our hypotheses and (2) to bound $\Gamma_{n}\phi_{n}$ from above
and below, showing that $\phi_{n} \to 0$. The proof of Theorem 2 relies almost entirely on the two preceding
lemmas. Theorem 3, however, will require more careful estimates to show the asymptotic equality:

\begin{theorem}
  If (H1)-(H6) hold with (H4a), then $\phi_{n}\Gamma_{n} \to 1$.
\end{theorem}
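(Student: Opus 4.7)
The plan is to sandwich $\Gamma_n\phi_n$ between two quantities tending to $1$ by invoking Agresti's two-sided bound (2) at $s=0$:
$$\left(\frac{1}{m_n}+\Gamma_n\right)^{-1}\;\leq\;\phi_n\;\leq\;\left(\frac{1}{m_n}+A_n\right)^{-1},\qquad A_n := \sum_{j=0}^{n-1}\frac{g_j''(f_{jn}(0))}{2\mu_j m_{j+1}}.$$
The lower inequality immediately gives $\Gamma_n\phi_n\geq(1+(m_n\Gamma_n)^{-1})^{-1}\to 1$ by hypothesis (H6), so $\liminf\Gamma_n\phi_n\geq 1$. The upper inequality gives $\Gamma_n\phi_n\leq\Gamma_n/(1/m_n+A_n)$, and the same application of (H6) reduces the remaining task to showing $A_n/\Gamma_n\to 1$.

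Writing $\Gamma_n-A_n = \sum_{j=0}^{n-1}(g_j''(1)-g_j''(f_{jn}(0)))/(2\mu_j m_{j+1})$, the mean value theorem together with the monotonicity of $g_j'''$ on $[0,1]$ (automatic since $g_j$ is a PGF) gives $g_j''(1)-g_j''(f_{jn}(0))\leq g_j'''(1)\phi_{jn}$, and (H4a) bounds $g_j'''(1)\leq E(X_j^3)\leq C$ uniformly in $j$. Therefore the problem reduces to the key estimate
$$S_n\;:=\;\sum_{j=0}^{n-1}\frac{\phi_{jn}}{\mu_j m_{j+1}}\;=\;o(\Gamma_n). \qquad(\dagger)$$

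To prove $(\dagger)$ I would first use (H2) to fix $J_0$ and $c>0$ so that $g_j''(1)\geq 2c$ for all $j\geq J_0$, which makes $\Gamma_n$ comparable to the unweighted tail sum $\sum_{J_0\leq j<n}1/(\mu_j m_{j+1})$. Next, note that the branching process started from one particle at generation $j$ uses offspring distributions $X_j, X_{j+1},\ldots$, inherits (H1)--(H4) verbatim, and also satisfies (H5) and (H6) because $\Gamma_{jn}=m_j(\Gamma_n-\Gamma_j)\to\infty$ and $(m_n/m_j)\Gamma_{jn}=m_n(\Gamma_n-\Gamma_j)\sim m_n\Gamma_n\to\infty$. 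Applying Theorem 2 to this subprocess yields $\phi_{jn}\to 0$ as $n\to\infty$ for every fixed $j$. Monotonicity of $\phi_{jn}$ in $j$ (starting later leaves less time to die) then gives, for any $\epsilon>0$, a threshold $K_n\to\infty$ with $\phi_{jn}\leq\epsilon$ for $j\leq K_n$, which bounds the head $\sum_{j\leq K_n}$ of $S_n$ by $(\epsilon/c)\Gamma_n$.

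The main obstacle is the tail $\sum_{K_n<j<n}\phi_{jn}/(\mu_j m_{j+1})$, since the crude bound $\phi_{jn}\leq 1$ only yields a constant multiple of $\Gamma_n-\Gamma_{K_n}$, which need not be $o(\Gamma_n)$. To surmount this I would re-apply the Agresti upper bound, now to the subprocess rooted at generation $j$, to obtain $\phi_{jn}\leq[m_j(1/m_n+A_{jn})]^{-1}$ with $A_{jn}$ the analogous sub-sum; interchanging the order of the resulting double summation pairs each term of the tail against a factor of $\Gamma_n$ and, together with (H5), produces the $o(\Gamma_n)$ bound needed to close the argument. This double-sum manipulation is the delicate step, and is likely where the two technical lemmas mentioned earlier in the paper do their work.
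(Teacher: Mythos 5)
Your skeleton matches the paper's: sandwich $\phi_n$ with Agresti's bound (2), dispose of the lower bound via (H6), reduce the upper bound to $A_n/\Gamma_n\to 1$ via the mean value theorem and the uniform bound on $g_j'''(1)$ from (H4a), and split the sum at the index where $\phi_{jn}$ first crosses a small threshold. But the step you defer --- controlling the tail $K_n\le j<n$ --- is exactly the content of the theorem, and your proposed route to it (re-applying Agresti's upper bound to each subprocess and interchanging a double sum) is not carried out and is not what makes the argument close. The actual key observation is that the estimate you already derived in the same paragraph, namely $\phi_{jn}\le C/\bigl(m_j(\Gamma_n-\Gamma_j)\bigr)$ (Theorem~2 applied to the subprocess rooted at $j$), when evaluated \emph{at the crossing index itself}, where by definition $\phi_{K_n n}\ge \epsilon$ (in the paper, $\ge\epsilon\eta/L$), forces
\begin{align*}
\Gamma_n-\Gamma_{K_n}\;\le\;\frac{C}{\epsilon\, m_{K_n}},\qquad\text{hence}\qquad
\frac{\Gamma_n-\Gamma_{K_n}}{\Gamma_n}\;\le\;\frac{C}{\epsilon\, m_{K_n}\Gamma_{K_n}}\;\longrightarrow\;0
\end{align*}
by (H6), since $K_n\to\infty$ and $\Gamma_{K_n}\le\Gamma_n$. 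So your remark that the crude bound $\phi_{jn}\le 1$ on the tail ``only yields a constant multiple of $\Gamma_n-\Gamma_{K_n}$, which need not be $o(\Gamma_n)$'' is exactly backwards: that quantity \emph{is} $o(\Gamma_n)$, in one line, from ingredients you already have. Without this the proof is not complete.

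Two smaller corrections. First, $\phi_{jn}$ is \emph{not} monotone in $j$: starting later leaves less time to die, but also forgoes the early branching (if $X_j=2$ a.s.\ and $f_{j+1,n}(0)=1/2$ then $\phi_{jn}=3/4>\phi_{j+1,n}=1/2$), and in the other direction a generation with large $P(X_j=0)$ pushes $\phi_{jn}$ below $\phi_{j+1,n}$. You do not need monotonicity: define $K_n=\min\{j:\phi_{jn}>\epsilon\eta/L\}$ as the paper does, so that $\phi_{jn}\le\epsilon\eta/L$ for all $j<K_n$ automatically, and $K_n\to\infty$ because $\phi_{jn}\to 0$ for each fixed $j$. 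Second, the threshold must be taken as $\epsilon\eta/L$ with $\eta$ a uniform lower bound on $g_j''(1)$ (available from (H2) for large $j$), not a bare $\epsilon$, so that the mean-value estimate turns into the multiplicative bound $g_j''(f_{jn}(0))\ge(1-\epsilon)g_j''(1)$ that can be summed against $\Gamma_{K_n}$; your additive reduction to $S_n=o(\Gamma_n)$ also works, but then the head of $S_n$ must be compared to $\Gamma_n$ using that same lower bound $\eta$, which is what your constant $c$ is implicitly doing.
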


Theorems 2 and 3 are sufficient to prove Theorem 1 with (H4a). For the moments convergence with (H4b),
Theorem 4 relates $\Gamma_{n}$ to the factorial moments of $Z_{n}$. 

\begin{theorem}
  If (H1)-(H6) hold with (H4b), then
    \begin{align*}
      \frac{F_{n,r}}{m_{n}^{r}\Gamma_{n}^{r-1}} \to r! \qquad (r \geq 1).
    \end{align*}
\end{theorem}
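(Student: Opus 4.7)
The plan is to induct on $r$, using the compositional recursion $f_{n+1}(s) = f_n(g_n(s))$ and Faà di Bruno's formula. The case $r=1$ is immediate from $F_{n,1} = m_n$, and the case $r = 2$ is exact: differentiating $f_{n+1} = f_n \circ g_n$ twice gives $F_{n+1,2} = \mu_n^2 F_{n,2} + m_n g_n''(1)$, so dividing by $m_{n+1}^2 = \mu_n^2 m_n^2$ and telescoping from $F_{0,2} = 0$ yields $F_{n,2}/m_n^2 = 2\Gamma_n$ on the nose, confirming the claim for $r = 2$.

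For the inductive step, Faà di Bruno's formula applied to $f_{n+1} = f_n \circ g_n$ at $s = 1$ (where $g_n(1) = 1$) gives
\[
F_{n+1,r} \;=\; \sum_{\pi} F_{n,|\pi|}\prod_{B \in \pi} g_n^{(|B|)}(1),
\]
summed over set partitions $\pi$ of $\{1, \ldots, r\}$. Writing $\beta_{n,r} := F_{n,r}/m_n^r$ and dividing by $m_{n+1}^r = \mu_n^r m_n^r$, the all-singletons partition contributes exactly $\beta_{n,r}$, while the $\binom{r}{2}$ one-doubleton partitions contribute, via the identity $g_n''(1)/(\mu_n m_{n+1}) = 2(\Gamma_{n+1}-\Gamma_n)$, a total of $r(r-1)(\Gamma_{n+1}-\Gamma_n)\,\beta_{n,r-1}$. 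Collecting the remaining partitions (those with at most $r-2$ blocks) into a remainder $R_n$,
\[
\beta_{n+1,r} - \beta_{n,r} \;=\; r(r-1)(\Gamma_{n+1}-\Gamma_n)\,\beta_{n,r-1} \;+\; R_n.
\]
Telescoping from $\beta_{0,r} = 0$ and using the induction hypothesis $\beta_{j,r-1} \sim (r-1)!\,\Gamma_j^{r-2}$, the main sum is asymptotic to $r(r-1)(r-1)!\sum_{j=0}^{n-1}\Gamma_j^{r-2}(\Gamma_{j+1}-\Gamma_j) \sim r!\,\Gamma_n^{r-1}$ by Riemann-sum approximation, justified because $\Gamma_n \to \infty$ by (H5), while (H4b) bounds $g_j''(1)$ uniformly and (H6) forces $\max_{j \leq n}(\Gamma_{j+1}-\Gamma_j)/\Gamma_n \to 0$.

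The hardest step is showing $\sum_{j=0}^{n-1} R_j = o(\Gamma_n^{r-1})$. For a partition with $k \leq r - 2$ blocks of sizes $b_1, \ldots, b_k$, of which $p \geq 1$ are non-singleton, (H4b) bounds each $g_j^{(b_l)}(1)$ uniformly and (H2) guarantees $g_j''(1) \geq c > 0$ eventually, so any ratio $g_j^{(b)}(1)/g_j''(1)$ is uniformly bounded for $b \geq 2$; substituting $g_j''(1) = 2\mu_j m_{j+1}(\Gamma_{j+1}-\Gamma_j)$ and invoking the inductive bound $\beta_{j,k} = O(\Gamma_j^{k-1})$, each partition's contribution reduces to $O(\Gamma_j^{k-1}(\Gamma_{j+1}-\Gamma_j)^p / m_{j+1}^{r-k-p})$. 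When $p \geq 2$ the extra factor of $\Gamma_{j+1}-\Gamma_j \to 0$ immediately gives an $o(\Gamma_n^{r-1})$ bound after summing; the delicate case is $p = 1$ with a single block of size $b \geq 3$, where the denominator $m_{j+1}^{b-2}$ must be tamed via (H6) (for any fixed $A > 0$, eventually $1/m_{j+1} \leq \Gamma_{j+1}/A$, and sending $A \to \infty$ after summing shrinks the tail to zero relative to $\Gamma_n^{r-1}$). Navigating the combinatorics of all partition types and applying (H6) cleanly on the single-large-block case is the main technical obstacle.
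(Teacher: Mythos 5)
Your proposal is essentially the paper's own argument: you differentiate the composition $f_{n+1}=f_n\circ g_n$ $r$ times (the paper writes the resulting coefficients as $C_{n,k}$ rather than via Fa\`a di Bruno, but they are the same, with the dominant one $\binom{r}{2}g_n''(1)\mu_n^{r-2}F_{n,r-1}$), telescope, induct on $r$, and close with the discrete Riemann-sum identity $\sum_j \Gamma_j^{r-2}(\Gamma_{j+1}-\Gamma_j)\sim \Gamma_n^{r-1}/(r-1)$, which the paper obtains equivalently by binomially expanding $\Gamma_{n+1}^{r}=(\Gamma_n+\frac{g_n''(1)}{2\mu_n m_{n+1}})^{r}$. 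Your treatment of the remainder is if anything more explicit than the paper's.

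One concrete slip: in the remainder analysis you claim that for $p\ge 2$ non-singleton blocks ``the extra factor of $\Gamma_{j+1}-\Gamma_j\to 0$ immediately'' kills the term. But $\Gamma_{j+1}-\Gamma_j\le C/m_{j+1}$, and the hypotheses (see the paper's discussion after (H6)) allow $m_n\to 0$ polynomially, so the increments need not vanish. The conclusion is still correct, but the $p\ge 2$ case must be routed through the same (H6) mechanism you already use for $p=1$: write $1/m_{j+1}\le \varepsilon_j\Gamma_{j+1}$ with $\varepsilon_j=(m_{j+1}\Gamma_{j+1})^{-1}\to 0$, so that every partition with $k\le r-2$ blocks contributes at most $C\varepsilon_j^{\,r-k-1}\Gamma_{j+1}^{r-2}(\Gamma_{j+1}-\Gamma_j)$, which sums to $o(\Gamma_n^{r-1})$. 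With that correction the argument goes through.
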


These are all the necessary parts to complete Theorem 1. As an endnote, we prove that
hypothesis (H6) can be substituted for a stronger but more intuitive condition:

\begin{theorem}
  Given (H1)-(H5), if $\mu_{n} \to 1$ then (H6) holds.
\end{theorem}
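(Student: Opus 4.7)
The plan is to derive a simple one-step recursion for $u_n := m_n \Gamma_n$ and then drive $u_n$ to infinity by a contraction argument that exploits $\mu_n \to 1$ together with a positive lower bound on $g_n''(1)$ coming from (H2).

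First I would compute the recursion. From $\Gamma_{n+1} = \Gamma_n + g_n''(1)/(2\mu_n m_{n+1})$ and $m_{n+1} = \mu_n m_n$, multiplying through by $m_{n+1}$ yields
\[
u_{n+1} \;=\; \mu_n u_n + c_n, \qquad c_n \;:=\; \frac{g_n''(1)}{2\mu_n}.
\]
Next I would show $c_n$ is bounded below by a positive constant for all large $n$. By (H2) there exist $\delta > 0$ and $N_0$ so that $P(X_n \geq 2) \geq \delta$ for $n \geq N_0$, whence $g_n''(1) = E(X_n(X_n-1)) \geq 2\delta$. Because $\mu_n \to 1$ the $\mu_n$ are eventually bounded above, so $c_n \geq c$ for some $c > 0$ and all sufficiently large $n$.

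With these two ingredients the contraction argument is immediate. Fix $\epsilon \in (0,1)$ and choose $N$ so large that $\mu_n \geq 1-\epsilon$ and $c_n \geq c$ for all $n \geq N$; then $u_{n+1} \geq (1-\epsilon) u_n + c$. Let $L := c/\epsilon$, the fixed point of $x \mapsto (1-\epsilon) x + c$. Using the identity $L - c = (1-\epsilon) L$ one obtains
\[
L - u_{n+1} \;\leq\; (1-\epsilon)(L - u_n) \qquad (n \geq N),
\]
so $(L - u_n)^+$ decays geometrically and $\liminf_{n \to \infty} u_n \geq L = c/\epsilon$. Since $\epsilon > 0$ was arbitrary, $u_n \to \infty$, which is exactly (H6).

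The main (and really the only) obstacle is extracting the positive lower bound on $c_n$; hypothesis (H2) delivers this cleanly, and everything downstream is routine manipulation of a linear recursion with slowly-varying multiplier. I note that the argument does not appear to invoke (H5) directly---the recursion $u_{n+1} = \mu_n u_n + c_n$ forces $u_n$ upward toward the fixed point $c/\epsilon$ independently of how $m_n$ and $\Gamma_n$ individually behave---though (H5) is of course consistent with $\mu_n \to 1$ and is what places the process in the critical regime.
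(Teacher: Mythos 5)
Your proof is correct, and it takes a genuinely different route from the paper's. The paper first sandwiches $\Gamma_{n}$ between constant multiples of $\sum_{j=0}^{n-1}1/m_{j+1}$ (which requires the uniform upper bound on $g_{j}''(1)$ and the two-sided bounds on $\mu_{j}$, i.e.\ (H1), (H2) and (H4)), then uses the telescoping identity $1/m_{n}=1+\sum_{j}(1-\mu_{j})/m_{j+1}$ and invokes (H5) to make the leading error terms $C/\sum_{j}1/m_{j+1}$ vanish. You instead work directly with $u_{n}=m_{n}\Gamma_{n}$ via the exact one-step recursion $u_{n+1}=\mu_{n}u_{n}+g_{n}''(1)/(2\mu_{n})$, and the fixed-point/contraction argument is airtight: the constant $c$ with $c_{n}\ge c$ eventually is independent of $\epsilon$, so $\liminf u_{n}\ge c/\epsilon$ for every $\epsilon$ forces $u_{n}\to\infty$. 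What your approach buys is economy of hypotheses: you genuinely do not need (H5), nor the upper moment bound (H4) (the eventual upper bound on $\mu_{n}$ already follows from $\mu_{n}\to 1$), whereas the paper's argument leans on both; so your argument in fact proves a slightly stronger statement, namely that (H2) together with $\mu_{n}\to 1$ already implies (H6). The paper's approach, by contrast, makes visible the comparability of $\Gamma_{n}$ with $\sum_{j}1/E(Z_{j})$, which connects (H5)--(H6) to the classical extinction criterion, but as a proof of this particular theorem it is the longer road.
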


The difference between (H6) and the condition $\mu_{n} \to 1$ is that (H6) allows for generation
means that oscillate around 1. 

\subsection{Two Technical Lemmas} %%%%%%%%%%%%%%%
\label{Two Technical Lemmas}

First we point out a few immediate technical consequences of (H1)-(H6). Hypotheses (H1) and (H4)
imply that $\mu_{n}$ is uniformly bounded away from zero and from infinity. Hypothesis (H2) implies that
$\liminf_{n \to \infty} g_{n}''(1) > 0$, because $\sum_{k=2}^{\infty}P(X_{n}=k)$ is bounded away from zero.
And finally, because factorial moments can be expressed as
combinations of regular moments, we have $\sup_{n \geq 0}g_{n}^{(r)}(1) < \infty$ for $r=1,2,3$ under (H4a)
or all $r \geq 1$ under (H4b).

\begin{lemma}
  Hypotheses (H2) and (H4) imply that there exists an $N$ such that for every $0<\epsilon_{0} \leq 1$,
  we have
    \begin{align*}
      \inf_{\epsilon \geq \epsilon_{0}}\inf_{n \geq N} g_{n}''(\epsilon) > 0.
    \end{align*}
\end{lemma}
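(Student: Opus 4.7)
The starting observation is that every probability generating function $g_n$ has nonnegative Taylor coefficients, so all derivatives $g_n^{(k)}$ are nonnegative and nondecreasing on $[0,1]$. In particular $g_n''$ is nondecreasing on $[0,1]$, which collapses the inner infimum: $\inf_{\epsilon \geq \epsilon_0} g_n''(\epsilon) = g_n''(\epsilon_0)$. So the lemma reduces to showing that there exists $N$ such that, for every $\epsilon_0 \in (0,1]$,
\[
\inf_{n \geq N} g_n''(\epsilon_0) > 0.
\]

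From (H2) I get constants $\delta > 0$ and $N \in \bbN$ with $P(X_n \geq 2) \geq \delta$ for all $n \geq N$. From (H4) I get $C := \sup_n E(X_n^2) < \infty$. These two ingredients play off each other: (H2) provides a nontrivial mass of offspring numbers at least $2$, while (H4) forces that mass not to escape entirely to arbitrarily large $k$, where the factor $\epsilon_0^{k-2}$ in $g_n''(\epsilon_0) = \sum_{k \geq 2} k(k-1) P(X_n = k)\,\epsilon_0^{k-2}$ would crush the bound.

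The technical step is a uniform truncation. By Markov applied to $X_n^2$, $P(X_n \geq K+1) \leq C/K^2$ uniformly in $n$, so choosing $K$ with $K^2 > 2C/\delta$ ensures $P(X_n \geq K+1) < \delta/2$ for all $n$. Combined with the previous step, $P(2 \leq X_n \leq K) \geq \delta/2$ for every $n \geq N$. Retaining only these terms in the series and bounding each $\epsilon_0^{k-2}$ from below by $\epsilon_0^{K-2}$ yields
\[
g_n''(\epsilon_0) \;\geq\; \sum_{k=2}^{K} k(k-1) P(X_n = k)\, \epsilon_0^{k-2} \;\geq\; 2 \epsilon_0^{K-2} \cdot \frac{\delta}{2} \;=\; \delta\, \epsilon_0^{K-2},
\]
which is a strictly positive lower bound depending only on $\epsilon_0$ (through $K$ and the constants $\delta, C$), and is uniform in $n \geq N$.

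The only subtle point, which I would flag as the main (modest) obstacle, is this coupling between (H2) and (H4): without a uniform second-moment bound, the $\delta$-mass at $\{X_n \geq 2\}$ could drift off to larger and larger $k$ as $n$ grows, making $g_n''(\epsilon_0)$ arbitrarily small for any fixed $\epsilon_0 < 1$. Once truncation is in place the argument is essentially a one-line estimate, and the monotonicity of $g_n''$ handles the inner infimum for free.
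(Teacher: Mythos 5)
Your proof is correct and follows essentially the same route as the paper: reduce the double infimum to a fixed $\epsilon_0$ via monotonicity of $g_n''$, then use (H2) together with a uniform moment bound to truncate the offspring distribution at a level $K$ independent of $n$, keeping at least $\delta/2$ of the mass in $\{2,\dots,K\}$. The only (immaterial) difference is that you truncate via Markov's inequality applied to $X_n^2$, whereas the paper obtains the same tail control from the uniform bound on the first moments $\mu_n$.
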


Lemma 1 says that the generating functions of the offspring distributions are convex, uniformly in $n$.
Convexity should follow intuitively from hypothesis (H2), because
  \begin{align*}
    g_{n}''(\epsilon) = \sum_{k=2}^{\infty}k(k-1)P(X_{n}=k)\epsilon^{k-2},
  \end{align*}
and (H2) tells us that the sum of the coefficients $\{P(X_{n}=k)\}_{k=2}^{\infty}$ stays away from zero.
To get uniformity in $n$, The only possible problem is mass tending towards the tails of the $X_{n}$ distribution,
and this is precluded by hypothesis (H4).

  \begin{proof}
    Since $g_{n}''(s)$ is a power series of nonnegative terms, it is increasing in $s$. Therefore the uniformity
    in $\epsilon \geq \epsilon_{0}$ follows immediately from showing that $\inf_{n \geq N} g_{n}''(\epsilon)>0$
    for some fixed $\epsilon>0$.
    
    By (H4) pick $M$ such that $\mu_{n} < M$ for every $n$. By (H2) pick $N,\delta>0$
    such that for all $n \geq N$, we have $\sum_{k=2}^{\infty} P(X_{n}=k) > \delta$.
    Then $\sum_{k=2}^{2M/\delta} P(X_{n}=k) \geq \frac{\delta}{2}$ for all $n \geq N$;
    to see this, suppose otherwise. Then there would be an $n \geq N$ such that
      \begin{align*}
        \mu_{n} \geq \sum_{k=2M/\delta}^{\infty} kP(X_{n}=k) \geq \frac{2M}{\delta}
          \sum_{k=2M/\delta}^{\infty}P(X_{n}=k) > M,
      \end{align*}
    a contradiction. Now we have
      \begin{align*}
        g_{n}''(\epsilon) = \sum_{k=2}^{\infty}k(k-1)P(X_{n}=k)\epsilon^{k-2}
          \geq \sum_{k=2}^{2M/\delta} P(X_{n}=k) \epsilon^{2M/\delta}
          \geq \frac{\delta}{2}\cdot \epsilon^{2M/\delta}.
      \end{align*}
  \end{proof}
  
\begin{lemma}
  Assumptions (H1)-(H5) imply that there exist $C,N > 0$ such that
  for every $n \geq N$, we have
    \begin{align*}
      \Gamma_{n} \equiv \sum_{j=0}^{n-1} \frac{g_{j}''(1)}{2\mu_{j}m_{j+1}}
         \leq C \sum_{j=0}^{n-1} \frac{g_{j}''(f_{jn}(0))}{2\mu_{j}m_{j+1}}.
    \end{align*}
\end{lemma}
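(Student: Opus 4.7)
The plan is to show that, beyond some finite index $N_1$, the ratio $g_j''(1)/g_j''(f_{jn}(0))$ is bounded by a constant independent of $n$, and then absorb the finitely many small-$j$ terms by invoking (H5), which forces $\Gamma_n \to \infty$.

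First I would use (H3) to obtain a uniform lower bound on $f_{jn}(0)$. The key observation is that since $g_j$ is a power series with non-negative coefficients,
\begin{align*}
  f_{jn}(0) = g_j(f_{j+1,n}(0)) \geq g_j(0) = P(X_j = 0)
\end{align*}
for every $j < n$. By (H3), there exist $N_0$ and $\delta > 0$ with $P(X_j = 0) \geq \delta$ for $j \geq N_0$; hence $f_{jn}(0) \geq \delta$ uniformly in $n$ for all such $j$. Applying Lemma 1 with $\epsilon_0 = \delta$ yields $N_1 \geq N_0$ and $c_\delta > 0$ such that $g_j''(\epsilon) \geq c_\delta$ for every $j \geq N_1$ and $\epsilon \geq \delta$. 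Since $g_j''$ is increasing, this gives $g_j''(f_{jn}(0)) \geq c_\delta$ for all $j \in [N_1,n-1]$. Combined with $M' := \sup_j g_j''(1) < \infty$ (from (H4a), as noted right before Lemma 1), we conclude $g_j''(1) \leq (M'/c_\delta)\,g_j''(f_{jn}(0))$ uniformly for $j \in [N_1,n-1]$.

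Next I would split $\Gamma_n$ at $N_1$: the initial block $S_1 := \sum_{j=0}^{N_1-1} g_j''(1)/(2\mu_j m_{j+1})$ is a constant independent of $n$, while the tail is dominated by $(M'/c_\delta)\sum_{j=0}^{n-1} g_j''(f_{jn}(0))/(2\mu_j m_{j+1})$. By (H5), $\Gamma_n \to \infty$, so the tail diverges; this in turn forces $\sum_{j=0}^{n-1} g_j''(f_{jn}(0))/(2\mu_j m_{j+1}) \to \infty$. Choosing $N$ large enough that for $n \geq N$ this last sum exceeds $S_1$ gives
\begin{align*}
  \Gamma_n \;\leq\; S_1 + \frac{M'}{c_\delta}\sum_{j=0}^{n-1}\frac{g_j''(f_{jn}(0))}{2\mu_j m_{j+1}} \;\leq\; \Bigl(1 + \frac{M'}{c_\delta}\Bigr)\sum_{j=0}^{n-1}\frac{g_j''(f_{jn}(0))}{2\mu_j m_{j+1}},
\end{align*}
which is the claimed inequality with $C = 1 + M'/c_\delta$.

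The main conceptual hurdle is recognizing that the correct leverage on $f_{jn}(0)$ comes from (H3) together with the single-step identity $f_{jn}(0) = g_j(f_{j+1,n}(0))$ — no asymptotic or long-range extinction information is required, because non-negativity of the Taylor coefficients already gives $f_{jn}(0) \geq P(X_j=0)$. Once this is recognized, the rest is bookkeeping: (H4a) supplies a uniform upper bound on $g_j''(1)$, Lemma 1 supplies the uniform lower bound on $g_j''$ away from $0$, and (H5) disposes of the bounded initial block.
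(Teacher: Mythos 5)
Your proof is correct and follows essentially the same route as the paper's: bound the ratio $g_j''(1)/g_j''(f_{jn}(0))$ uniformly for large $j$ via Lemma 1 together with the uniform bound on $g_j''(1)$ from (H4), then absorb the finitely many initial terms using $\Gamma_n \to \infty$ from (H5). Your one-step observation $f_{jn}(0) = g_j(f_{j+1,n}(0)) \ge g_j(0) = P(X_j = 0)$ is a slightly cleaner way to obtain the uniform lower bound on $f_{jn}(0)$ than the paper's two-case composition argument, but it is the same underlying idea.
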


Lemma 2 is a first, rough relation between the upper bound in (2) and $\Gamma_{n}$.
Note that $f_{jn}(0)$ is simply $1-\phi_{jn}$, the probability of the process started at time step $j$
dying out by time $n$. Because extinction is sure, we know that $\phi_{jn} \to 0$ as $n-j \to \infty$,
and so ``most'' of the terms of the sum on the right will have $f_{jn}(0)$ close to 1.

  \begin{proof}
    Hypothesis (H5) tells us that all of the mass of $\Gamma_{n}$ is in the tail of the sum, and so
    it suffices to show that there exist $N,C>0$ such that
    $g_{j}''(1) \leq Cg_{j}''(f_{jn}(0))$ for all $n \geq N$ and $j<n$.
    By (H3), fix $N$ such that $\inf_{n \geq N} g_{n}(0) > 0$. Now for any $j \geq N$,
    $f_{jn}$ is a composition of $g_{k}$'s satisfying $\inf_{k \geq N} ||g_{k}||_{\infty} > 0$,
    because every $g_{k}$ is an increasing function and bounded uniformly below at 0.
    Thus we have
      \begin{align*}
        \inf_{n > N} \inf_{N\leq j < n}f_{jn}(0) > 0.
      \end{align*}
    If, on the other hand, $j < N$, then we write
      \begin{align*}
        f_{jn}(0) = (f_{jN} \circ f_{Nn})(0).
      \end{align*}
    From the previous case, we know that $f_{Nn}(0)$ is bounded away from 0 uniformly. From (H1), we
    know that the $g_{j}$ are all strictly increasing and nonnegative, and so
    $f_{jN}$ is a composition of at most $N$ functions that are positive away from 0. Thus once again,
      \begin{align*}
        \inf_{n>N} \inf_{j < n} f_{jn}(0) > 0.
      \end{align*}
    Thus for $n \geq N$, the argument of $g_{j}''$ is bounded away from 0, and so
    by Lemma 1, picking $N$ larger if necessary, we have $\inf_{n \geq N}g_{j}''(f_{jn}(0)) > 0$.
    From (H4) we know that $\sup_{n \geq 0} g_{j}''(1) < \infty$, and so we can bound the latter quantity
    uniformly by the former. Since $\Gamma_{n} \to \infty$, we can ignore the first $N$ terms
    of the sum and substitute this bound in the sum, proving the desired result.
  \end{proof}

\subsection{Proof of Theorem 2} %%%%%%%%%%%%%%%
\label{Proof of Theorem 2}
To prove Theorem 2, we simply put together the lemmas we've shown, hypothesis (H6), and
the estimate (2).

  \begin{proof}
    Consider (2). Hypothesis (H6) tells us that the left hand side is asymptotically
    $\frac{1}{\Gamma_{n}}$. Hypothesis (H6) and Lemma 2 tell us that the right hand side
    is asymptotically smaller than $\frac{C}{\Gamma_{n}}$ for some constant $C$.
    Consequently,
      \begin{align*}
        1 \leq \liminf_{n \to \infty} \phi_{n} \Gamma_{n} \leq \limsup_{n \to \infty} \phi_{n} \Gamma_{n} \leq C.
      \end{align*}
    Since $\Gamma_{n} \to \infty$, we have $\phi_{n} \to 0$ and thus extinction occurs
    with probability one.
  \end{proof}

\subsection{Proof of Theorem 3} %%%%%%%%%%%%%%%
\label{Proof of Theorem 3}
The proof of Theorem 3 can be thought of as a more careful version of Lemma 2. In the sum
involving $g_{n}''(f_{jn}(0))$, we show that the terms in which $f_{jn}(0)$ is far away from 1
contribute a negligible amount to the sum, so that the constant $C$ in Lemma 2 is in fact equal to 1.

  \begin{proof}
  \begin{enumerate}[1.]
    \item Applying the inequality $\phi_{n} \leq \frac{C}{\Gamma_{n}}$ to the process
      started at time $j$ (that is, $\{Z_{n} | Z_{j} = 1\}$) gives us
        \begin{align*}
          \phi_{jn} \leq C \frac{1}{\sum_{k=j}^{n-1} \frac{g_{k}''(1)}{2\mu_{k}(m_{k+1}/m_{j})}}
            = C \frac{1}{m_{j}(\Gamma_{n}-\Gamma_{j})}.
        \end{align*}
      Note that $m_{k}/m_{j}$ is the mean of the process starting at time $j$.
    \item By (H3), pick $N$ large enough and $\eta>0$ small enough that $\inf_{n \geq N}g_{n}''(1)>\eta$.
      By (H4), pick $L$ such that $\sup_{n \geq 0}g_{n}'''(1)<L$. Let $\epsilon>0$ and define the index
        \begin{align*}
          J(n) = \min\{j : \phi_{jn} > \epsilon\eta/L\}.
        \end{align*}
      The indices satisfying $j<J(n)$ correspond to the terms of $\phi_{jn}$ that are very close
      to 0 (i.e. terms where $f_{jn}(0)$ is very close to 1). We will show that ``most'' terms satisfy
      $j<J(n)$, in the sense that we will show that the terms with $j \geq J(n)$ contribute
      negligibly to the summation in the upper bound of (2).
      
      Since extinction is sure, $\phi_{jn} \to 0$ for fixed $j$ as $n \to \infty$. Consequently,
      $J(n) \to \infty$ as $n \to \infty$. If $N \leq j < J(n)$, then $\phi_{jn} \leq \epsilon\eta/L$, and
      we calculate via the mean value theorem:
        \begin{align*}
          g_{j}''(f_{jn}(0)) & = g_{j}''(1-\phi_{jn}) \\
          & = g_{j}''(1)-g_{j}'''(\xi)\phi_{jn}, \qquad \xi \in (1-\phi_{jn},1) \\
          & \geq g_{j}''(1)-L\phi_{jn} \\
          & = g_{j}''(1)\left(1-\frac{L\phi_{jn}}{g_{j}''(1)}\right) \\
          & \geq g_{j}''(1)\left(1-\frac{L\phi_{jn}}{\eta}\right) \\
          & \geq g_{j}''(1)(1-\epsilon).
        \end{align*}
    \item Using (2) and our result in 2., we find at large $n$:
        \begin{align*}
          1 \geq \frac{1}{\phi_{n}\Gamma_{n}}
            & \geq \frac{1}{\Gamma_{n}}\left(\frac{1}{m_{n}}+\sum_{j=1}^{n-1}\frac{g_{j}''(f_{jn}(0))}{2\mu_{j}m_{j+1}}\right) \\
          & \geq \frac{1}{\Gamma_{n}}\sum_{j=1}^{J(n)-1}\frac{g_{j}''(f_{jn}(0))}{2\mu_{j}m_{j+1}} \\
          & \geq \frac{1}{\Gamma_{n}}\sum_{j=1}^{J(n)-1}\frac{g_{j}''(1)(1-\epsilon)}{2\mu_{j}m_{j+1}} \\
          & \geq \frac{1}{\Gamma_{n}}(1-\epsilon)\Gamma_{J(n)}.
        \end{align*}
      Although our result in 2. also requires $N \leq j$, we can ignore finitely many of the terms of the
      series because of hypothesis (H5). Next, using the fact that $\Gamma_{J(n)} \leq \Gamma_{n}$,
        \begin{align*}
          \frac{1}{\Gamma_{n}}(1-\epsilon)\Gamma_{J(n)}
          & = \frac{\Gamma_{J(n)}}{\Gamma_{n}}-\epsilon\frac{\Gamma_{J(n)}}{\Gamma_{n}} \\
          & \geq \frac{\Gamma_{J(n)}}{\Gamma_{n}}-\epsilon \\
          & = (1-\epsilon)-\frac{\Gamma_{n}-\Gamma_{J(n)}}{\Gamma_{n}}.
        \end{align*}
      We want to show that the rightmost term goes to zero. Our result in 1. and the definition of
      $J(n)$ give us
        \begin{align*}
          \frac{\Gamma_{n}-\Gamma_{J(n)}}{\Gamma_{n}} \leq \frac{C}{m_{J(n)}\phi_{J(n)n}\Gamma_{n}}
            \leq \frac{CL}{\epsilon\eta}\cdot\frac{1}{m_{J(n)}\Gamma_{n}}.
        \end{align*}
      Since $J(n) \to \infty$, we have
        \begin{align*}
          m_{J(n)}\Gamma_{n} \geq m_{J(n)}\Gamma_{J(n)} \to \infty,
        \end{align*}
      and thus the desired term goes to zero. Whence,
        \begin{align*}
          1 \geq \limsup_{n \to \infty} \frac{1}{\phi_{n}\Gamma_{n}} \geq
            \liminf_{n \to \infty} \frac{1}{\phi_{n}\Gamma_{n}} \geq 1-\epsilon,
        \end{align*}
      for every $\epsilon>0$, completing the proof.
  \end{enumerate}
  \end{proof}

\subsection{Proof of Theorem 4} %%%%%%%%%%%%%%%
\label{Proof of Theorem 4}
Now we move our attention to the factorial moments of $Z_{n}$, and their relation to $\Gamma_{n}$.
In addition to the presence of $\Gamma_{n}$ in (2), the proof of Theorem 4 offers some insight
into how $\Gamma_{n}$ arises as an important quantity. At this point, we start assuming (H4b).

  \begin{proof}
  \begin{enumerate}[1.]
    \item Taking the $r$-th derivative of $f_{n+1}(s) = f_{n}(g_{n}(s))$ and evaluating at $1$, we find
        \begin{align*}
          F_{n+1,r} = \mu_{n}^{r}F_{n,r}+C_{n,r-1}F_{n,r-1} +...+C_{n,1}F_{n,1},
        \end{align*}
      where $C_{n,j}$ is a coefficient composed of factorial moments of $X_{n}$ and by (H4b) is
      thus bounded uniformly in $n$ for each $j$. The only coefficient we'll need explicitly
      is $C_{n,r-1} = \binom{r}{2}g_{n}''(1)\mu_{n}^{r-2}$. Dividing the above quantity by $m_{n+1}$,
      we find a recursion relation for $F_{n,r}$ on $n$, which we then iterate:
        \begin{align*}
          \frac{F_{n+1,r}}{m_{n+1}^{r}} & = \frac{F_{n,r}}{m_{n}^{r}} +
            C_{n,r-1}\frac{F_{n,r-1}}{m_{n+1}^{r}} +...+ C_{n,1}\frac{F_{n,1}}{m_{n+1}^{r}} \\
          & = \frac{F_{n,r}}{m_{n}^{r}} +\sum_{k=1}^{r-1}\frac{C_{n,k}F_{n,k}}{m_{n+1}^{r}} \\
          & = \sum_{k=1}^{r-1}\sum_{j=0}^{n} \frac{C_{j,k}F_{j,k}}{m_{j+1}^{r}}.
        \end{align*}
      Then, expanding the sum over the other index and splitting off some of the factors of $m_{j+1}$
      in the denominator,
        \begin{align*}
          \frac{F_{n+1,r}}{m_{n+1}^{r}} & = \sum_{j=0}^{n}\sum_{k=1}^{r-1} \frac{C_{j,k}F_{j,k}}{m_{j+1}^{r}} \\
          & = \sum_{j=0}^{n} C_{j,r-1}\frac{F_{j,r-1}}{m_{j+1}^{r}}
            + ... + \sum_{j=0}^{n} C_{j,1}\frac{F_{j,1}}{m_{j+1}^{r}} \\
          & = \sum_{j=0}^{n}\frac{C_{j,r-1}}{m_{j}\mu_{j}^{r}}\frac{F_{j,r-1}}{m_{j}^{r-1}}
            + ... + \sum_{j=0}^{n}\frac{C_{j,1}}{m_{j}^{r-1}\mu_{j}^{r}}\frac{F_{j,1}}{m_{j}}.
        \tag{3}
        \end{align*}
    \item We will prove Theorem 4 inductively. We know that $F_{n,1}=m_{n}$, so the claim is clear
      for $r=1$. Assume inductively it's true for $k=1,...,r-1$, so that
        \begin{align*}
          \frac{F_{n,k}/m_{n}^{k}}{k!\Gamma_{n}^{k-1}} \overset{n \to \infty}{\longrightarrow} 1, \qquad (k=1,..., r-1),
        \end{align*}
      and also
        \begin{align*}
          \frac{\frac{C_{n,k}}{m_{n}^{r-k}\mu_{n}^{r}}F_{n,k}/m_{n}^{k}}
            {\frac{C_{n,k}}{m_{n}^{r-k}\mu_{n}^{r}}k!\Gamma_{n}^{k-1}} \overset{n \to \infty}{\longrightarrow} 1,
            \qquad (k=1,..., r-1).
        \end{align*}
      We aim to substitute this into (3).
      Recall the general fact that if $\frac{a_{n}}{b_{n}} \to 1$ and $b_{n}$ is not summable,
      then $\frac{\sum_{j=1}^{n}a_{j}}{\sum_{j=1}^{n}b_{j}} \to 1$ as well. We would like to use this fact to say
        \begin{align*}
          \frac{\sum_{j=0}^{n}\frac{C_{j,k}}{m_{j}^{r-k}\mu_{j}^{r}} F_{j,k}/m_{j}^{k}}
            {\sum_{j=0}^{n} \frac{C_{j,k}}{m_{j}^{r-k}\mu_{j}^{r}}k!\Gamma_{j}^{k-1}} 
            \to 1, \qquad (k=1,...,r-1),
        \end{align*}
      but we can only use
      this relation in the largest few values of $k$, because
      $\frac{C_{n,k}}{m_{n}^{r-k}\mu_{n}^{r}}k!\Gamma_{n}^{k-1}$ is not summable for $k=r-1$
      (corresponding to the leading series in (3)) and it is summable for $k=1$ (corresponding
      to the last series in (3)). But since we're adding all these sums together, we only
      care about the ones that tend to infinity, so we'll clump the finite sums into an $O(1)$, and use
      the fact for the rest:
        \begin{align*}
          \frac{F_{n+1,r}}{m_{n+1}^{r}}& = \left[\sum_{j=0}^{n}\frac{C_{j,r-1}}{m_{j}\mu_{j}^{r}}
            (r-1)!\Gamma_{j}^{r-2} + o\left(\sum_{j=0}^{n}\frac{C_{j,r-1}}{m_{j}\mu_{j}^{r}}
            (r-1)!\Gamma_{j}^{r-2} \right)\right] \\
            & \qquad + \left[\sum_{j=0}^{n}\frac{C_{j,r-2}}{m_{j}\mu_{j}^{r}}(r-2)!\Gamma_{j}^{r-3}
              + o\left(\sum_{j=0}^{n}\frac{C_{j,r-2}}{m_{j}\mu_{j}^{r}}(r-2)!\Gamma_{j}^{r-3}\right)\right]
              + ... + O(1) \\
          & = \left[\sum_{j=0}^{n}\frac{(r-1)!C_{j,r-1}}{\mu_{j}^{r}} \frac{\Gamma_{j}^{r-2}}{m_{j}}
            + o\left(\sum_{j=0}^{n}\frac{\Gamma_{j}^{r-2}}{m_{j}} \right) \right] \\
            & \qquad + \left[\sum_{j=0}^{n}\frac{(r-2)!C_{j,r-2}}{\mu_{j}^{r}} \frac{\Gamma_{j}^{r-3}}{m_{j}^{2}}
            + o\left(\sum_{j=0}^{n}\frac{\Gamma_{j}^{r-3}}{m_{j}^{2}} \right) \right] + ... + O(1).
        \end{align*}
    \item The growth of the above sums is driven by the $\frac{\Gamma_{j}}{m_{j}}$ term.
      Since $m_{n}\Gamma_{n} \to \infty$, the first sum is of higher order than the others, and thus
        \begin{align*}
          \frac{F_{n+1,r}}{m_{n+1}^{r}}
            & = \sum_{j=0}^{n}\frac{(r-1)!C_{j,r-1}}{\mu_{j}^{r}} \frac{\Gamma_{j}^{r-2}}{m_{j}}
            + o\left(\sum_{j=0}^{n}\frac{\Gamma_{j}^{r-2}}{m_{j}} \right) \\
          & = r!\sum_{j=0}^{n}\frac{(r-1)g_{j}''(1)}{2\mu_{j}}\frac{\Gamma_{j}^{r-2}}{m_{j+1}}
            + o\left(\sum_{j=0}^{n} \frac{\Gamma_{j}^{r-2}}{m_{j}}\right),
        \end{align*}
      where in the last line we've substituted for $C_{j,r-1}$.
      To complete the inductive proof we just need to show
        \begin{align*}
          \Gamma_{n+1}^{r-1} = \sum_{j=0}^{n} \frac{(r-1)g_{j}''(1)}{2\mu_{j}}
            \frac{\Gamma_{j}^{r-2}}{m_{j+1}} + o\left(\sum_{j=0}^{n}\frac{\Gamma_{j}^{r-2}}{m_{j+1}}\right).
        \end{align*}
      To simplify notation, we show it for $r \mapsto r+1$; that is, we want to show
        \begin{align*}
          \Gamma_{n+1}^{r} = \sum_{j=0}^{n} \frac{rg_{j}''(1)}{2\mu_{j}}
            \frac{\Gamma_{j}^{r-1}}{m_{j+1}} + o\left(\sum_{j=0}^{n}\frac{\Gamma_{j}^{r-1}}{m_{j+1}}\right).
        \end{align*}
    \item A binomial expansion of $\Gamma_{n+1}^{r}$ gives us another recurrence relation:
        \begin{align*}
          \Gamma_{n+1}^{r} & = \left(\Gamma_{n}+\frac{g_{n}''(1)}{2\mu_{n}m_{n+1}}\right)^{r} \\
          & = \sum_{j=0}^{r} \binom{r}{j}\left(\frac{g_{n}''(1)}{2\mu_{n}m_{n+1}}\right)^{j}
            \Gamma_{n}^{r-j} \\
          & = \Gamma_{n}^{r}+r\frac{g_{n}''(1)}{2\mu_{n}}\frac{\Gamma_{n}^{r-1}}{m_{n+1}}
            + ... + \left(\frac{g_{n}''(1)}{2\mu_{n}}\right)^{r}\frac{1}{m_{n+1}^{r}}.
        \end{align*}
      Expanding that recurrence relation yields
        \begin{align*}
          \Gamma_{n+1}^{r} & = \sum_{j=0}^{n}\frac{rg_{j}''(1)}{2\mu_{j}}\frac{\Gamma_{j}^{r-1}}{m_{j+1}}
            + ... + \sum_{j=0}^{n}\left(\frac{g_{j}''(1)}{2\mu_{j}}\right)^{r}\frac{1}{m_{j+1}^{r}}.
        \end{align*}
      As before, the first sum dominates the others, and we have
        \begin{align*}
          \Gamma_{n+1}^{r} & = \sum_{j=0}^{n}\frac{rg_{j}''(1)}{2\mu_{j}}
            \frac{\Gamma_{j}^{r-1}}{m_{j+1}} + o\left(\sum_{j=0}^{n}\frac{\Gamma_{j}^{r-1}}{m_{j+1}}\right),
        \end{align*}
      as desired.
  \end{enumerate}
  \end{proof}

\subsection{Proof of Theorem 1} %%%%%%%%%%%%%%%
\label{Proof of Theorem 1}
Proving Theorem 1 is now just a matter of putting together the pieces of the other theorems.

  \begin{proof}
  \begin{enumerate}[1.]
    \item Assume (H4a). We aim to show that the moment generating function of
      $\frac{\zeta_{n}}{E(\zeta_{n})}$ converges to $\frac{1}{1-s}$. First we expand
      the generating function of $Z_{n}$ by:
        \begin{align*}
          f_{n}(s) = E(s^{Z_{n}}) & = P(Z_{n} \ne 0)E(s^{Z_{n}} | Z_{n} \ne 0)+P(Z_{n} = 0)E(s^{Z_{n}} | Z_{n} =0) \\
          & = \phi_{n}E(s^{Z_{n}} | Z_{n} \ne 0)+(1-\phi_{n}).
        \end{align*}
      Then we substitute $s= \exp(-\frac{s}{E(Z_{n}|Z_{n} \ne 0)})$ to obtain the moment generating function
      of $-s$, which we will show tends to $\frac{1}{1+s}$. Also substituting $E(Z_{n}|Z_{n} \ne 0) = m_{n}/\phi_{n}$,
        \begin{align*}
          E\left(\exp\left(-\frac{sZ_{n}}{E(Z_{n}|Z_{n}\ne 0)}\right) \middle| Z_{n} \ne 0\right)
            & = \frac{f_{n}(\exp(-\frac{s}{E(Z_{n}|Z_{n} \ne 0)}))-1}{\phi_{n}}+1 \\
            & = \frac{f_{n}(\exp(-\frac{s\phi_{n}}{m_{n}}))-1}{\phi_{n}}+1.
        \end{align*}
    \item Part 3 of the proof of Theorem 3 showed that
        \begin{align*}
          \sum_{j=0}^{n-1}\frac{g_{j}''(f_{jn}(0))}{2\mu_{j}m_{j+1}} = \Gamma_{n}+o(\Gamma_{n}),
        \end{align*}
      and thus (1) takes the form
        \begin{align*}
          \left(\frac{1}{(1-s)m_{n}} + \Gamma_{n}\right)^{-1} \leq 1-f_{n}(s)
            \leq \left(\frac{1}{(1-s)m_{n}} + \Gamma_{n}+o(\Gamma_{n}) \right)^{-1}.
        \end{align*}
      Cleaning up this expression to make it look more like our moment generating function, we find
        \begin{align*}
          1-\frac{1}{\phi_{n}}\left(\frac{1}{(1-s)m_{n}} + \Gamma_{n}+o(\Gamma_{n})\right)^{-1}
            \leq \frac{f_{n}(s)-1}{\phi_{n}}+1
            \leq 1-\frac{1}{\phi_{n}}\left(\frac{1}{(1-s)m_{n}} + \Gamma_{n} \right)^{-1}.
        \end{align*}
      And substituting $s = \exp(-\frac{s\phi_{n}}{m_{n}})$,
        \begin{align*}
          1-\frac{1}{\phi_{n}}\left(\frac{1}{(1-\exp(-\frac{s\phi_{n}}{m_{n}}))m_{n}}
            +\Gamma_{n}+o(\Gamma_{n})\right)^{-1}
            & \leq E\left(\exp\left(-\frac{sZ_{n}}{E(Z_{n}|Z_{n}\ne 0)}\right) \middle| Z_{n} \ne 0\right) \\
            & \qquad \leq 1-\frac{1}{\phi_{n}}\left(\frac{1}{(1-\exp(-\frac{s\phi_{n}}{m_{n}}))m_{n}}
            +\Gamma_{n}\right)^{-1}.
        \end{align*}
      Thus it suffices to show that
        \begin{align*}
          1-\frac{1}{\phi_{n}}\left(\frac{1}{(1-e^{-\frac{s\phi_{n}}{m_{n}}})m_{n}}+\Gamma_{n}\right)^{-1}
            \to \frac{1}{1+s}.
        \end{align*}
    \item Hypothesis (H6) and Theorem 3 imply that $\frac{\phi_{n}}{m_{n}} \to 0$,
      and so $e^{-\frac{s\phi_{n}}{m_{n}}} \to 1$. Taking a first order approximation
      and using Theorem 3,
        \begin{align*}
          1-\frac{1}{\phi_{n}}\left(\frac{1}{(1-e^{-\frac{s\phi_{n}}{m_{n}}})m_{n}}+\Gamma_{n}\right)^{-1}
            & = 1-\frac{1}{\phi_{n}}\left(\frac{1}{(s\phi_{n}/m_{n}+o(\phi_{n}/m_{n}))m_{n}}+\Gamma_{n}
              \right)^{-1} \\
          & = 1-\frac{1}{\Gamma_{n}\phi_{n}}\left(\frac{1}{s\phi_{n}\Gamma_{n}+o(\phi_{n}\Gamma_{n})}
            + 1 \right)^{-1} \\
          & \to 1-\frac{s}{1+s} \\
          & = \frac{1}{1+s},
        \end{align*}
      as desired. This concludes the proof in the case of (H4a).
    \item Now assume (H4b). The $r$-th moment of the conditioned, scaled process can be written as
        \begin{align*}
          E\left(\left(\frac{Z_{n}}{E(Z_{n}|Z_{n} \ne 0)}\right)^{r}\middle|Z_{n} \ne 0\right)
            & = \frac{E(Z_{n}^{r}|Z_{n} \ne 0)}{E(Z_{n}|Z_{n} \ne 0)^{r}} \\
          & = \frac{M_{n,r}/\phi_{n}}{(m_{n}/\phi_{n})^{r}} \\
          & = \frac{M_{n,r}\phi_{n}^{r-1}}{m_{n}^{r}}.
        \end{align*}
      Theorem 3 tells us
        \begin{align*}
          \lim_{n \to \infty} \frac{M_{n,r}\phi_{n}^{r-1}}{m_{n}^{r}}
            & = \lim_{n \to \infty} \frac{M_{n,r}}{m_{n}^{r}\Gamma_{n}^{r-1}}.
        \end{align*}
      We can write moments in terms of factorial moments by
        \begin{align*}
          M_{n,r} = \sum_{k=0}^{r} \left\{\begin{matrix}r \\ k \end{matrix}\right \}F_{n,k},
        \end{align*}
      where $\{\begin{smallmatrix}r\\k\end{smallmatrix}\}$ is a Stirling number of the second kind.
      Theorem 4 and hypothesis (H5) tell us that the highest-index term dominates the
      sum, and since $\{\begin{smallmatrix} r \\ r \end{smallmatrix}\} = 1$, we find
        \begin{align*}
          \lim_{n \to \infty} \frac{M_{n,r}}{m_{n}^{r}\Gamma_{n}^{r-1}}
            = \lim_{n \to \infty} \frac{F_{n,r}}{m_{n}^{r}\Gamma_{n}^{r-1}} = r!
        \end{align*}
      as desired, completing the proof.
  \end{enumerate}
  \end{proof}

\subsection{Proof of Theorem 5} %%%%%%%%%%%%%%%
\label{Proof of Theorem 5}

  \begin{proof}
    Hypotheses (H1)-(H4) let us write
      \begin{align*}
        \Gamma_{n} C \geq \sum_{j=0}^{n-1}\frac{1}{m_{j+1}} \geq C'\Gamma_{n}.
      \end{align*}
    for constants $C,C'$. Since $m_{j+1}=\mu_{j}m_{j}$, we can write
      \begin{align*}
        \frac{1}{m_{n}} = 1+\sum_{j=0}^{n-1} \left( \frac{1}{m_{j+1}}-\frac{1}{m_{j}}\right)
          = 1+\sum_{j=0}^{n}\frac{1-\mu_{j}}{m_{j+1}}.
      \end{align*}
    Fix $\epsilon>0$ and pick $N$ large enough that $|1-\mu_{n}|<\epsilon$
    for all $n > N$. Then when $n > N$, combining these inequalities yields
      \begin{align*}
        \frac{1}{m_{n}\Gamma_{n}} & \leq C \dfrac{1+\sum_{j=0}^{n-1}\frac{|1-\mu_{j}|}{m_{j+1}}}
          {\sum_{j=0}^{n-1}\frac{1}{m_{j+1}}} \\
        & = \frac{C}{\sum_{j=0}^{n-1}\frac{1}{m_{j+1}}}+\frac{C\sum_{j=0}^{N}
          \frac{|1-\mu_{j}|}{m_{j+1}}}{\sum_{j=0}^{n-1}\frac{1}{m_{j+1}}}
          + \frac{C\sum_{j=N+1}^{n-1}\frac{|1-\mu_{j}|}{m_{j+1}}}
          {\sum_{j=0}^{n-1}\frac{1}{m_{j+1}}} \\
        & \leq \frac{C}{\sum_{j=0}^{n-1}\frac{1}{m_{j+1}}}+\frac{C\sum_{j=0}^{N}
          \frac{|1-\mu_{j}|}{m_{j+1}}}{\sum_{j=0}^{n-1}\frac{1}{m_{j+1}}}
          + C\epsilon.
      \end{align*}
    Hypothesis (H5) tells us that the first two sums tend to zero, and thus the entire last expression
    tends to $C\epsilon$. Thus $\frac{1}{m_{n}\Gamma_{n}} \to 0$, as desired.
  \end{proof}

This concludes the discrete setting of branching processes. Now we move on to the continuous
setting of birth-and-death processes.

\section{The Continuous Setting} %%%%%%%%%%%%%%%%%%%%%%%%%%%%%%
\label{The Continuous Setting}

The continuous setting will be presented and proved in exactly the same manner that
the discrete setting was. However, the discrete setting relied heavily on generating function
relations that are not present in the continuous setting, so the continuous setting will
require different techniques.

\subsection{Notation} %%%%%%%%%%%%%%%
\label{Notation2}
We will often drop the time argument of rates and means when it is implied.

We define the function $X_{t}$ by
  \begin{align*}
    -1 & \mapsto b_{-1}(t) \\
    1 & \mapsto b_{1}(t) \\
    & \vdots \\
    n & \mapsto b_{n}(t).
  \end{align*}
In this sense, $X_{t}$ is similar to a random variable that has probability $b_{k}(t)$ of taking value $k$;
however, $\sum_{k=-1,1,...,n} b_{k}(t)$ need not equal 1, so $X_{t}$ is not an actual random variable.
For purely notational reasons, though, it will be useful to denote moments of $X_{t}$ as though
it were a random variable; thus we denote
  \begin{align*}
    E(X_{t}) & = \sum_{k=-1,1,\dots,n}kb_{k}(t) \\
    E(X_{t}^{r}) & = \sum_{k=-1,1,\dots,n}k^{r}b_{k}(t).
  \end{align*}
We define $b_{0} = -\sum_{k=-1,1,...,n} b_{k}$, which gives us a cleaner expression for
transition probabilities:
  \begin{align*}
  \begin{cases}
    P(Z_{t+\Delta}-Z_{t}=k \mid Z_{t}) = Z_{t}\Delta b_{k}(t) + o(\Delta), & (k=-1,1,...,n) \\
    P(Z_{t+\Delta}-Z_{t}=0 \mid Z_{t}) = 1+Z_{t}\Delta b_{0}(t) + o(\Delta).
  \end{cases}
  \end{align*}
In the same way that $X_{t}$ is an analogue of $X_{n}$ in the discrete case,
we introduce the function
  \begin{align*}
    g_{t}(x) = \sum_{k=-1}^{n} b_{k}(t)x^{k+1},
  \end{align*}
which is (similarly to $X_{t}$) not a generating function, because the coefficient $b_{0}$ is negative.
It will nonetheless prove a useful notation for our needs. Indeed, we remark immediately that
  \begin{align*}
    g_{t}(1) & = 0 \\
    g_{t}'(1) & = E(X_{t}) \\
    g_{t}''(1) & = E(X_{t}^{2})+E(X_{t}) \\
    g_{t}'''(1) & = E(X_{t}^{3})-E(X_{t}).
  \end{align*}
We denote the moments of $Z_{t}$ by
  \begin{align*}
    M(t) & = E(Z_{t}) \\
    M_{r}(t) & = E(Z_{t}^{r})
  \end{align*}
and probabilities of survival by
  \begin{align*}
    \phi(t) & = P(Z_{t} \ne 0) \\
    \phi(s,t) & = P(Z_{t} \ne 0 | Z_{s} = 1).
  \end{align*}
Once again, we will denote by $\zeta_{t}$ the process conditioned on non-extinction:
  \begin{align*}
    \zeta_{t} = \{Z_{t} \mid Z_{t} \ne 0\}.
  \end{align*}
Finally, we introduce our continuous analogue of $\Gamma$:
  \begin{align*}
    \Gamma(t) = \int_{0}^{t}\frac{E(X_{s}^{2})}{2M(s)}ds,
  \end{align*}
which will once again be shown to satisfy $\Gamma(t)\phi(t) \to 1$ in the critical regime.

\subsection{Hypotheses} %%%%%%%%%%%%%%%
\label{Hypotheses2}
We'll make use of the following hypotheses:
\begin{enumerate}
  \item[(H7)] $\liminf_{t \geq 0} b_{-1}(t)  > 0$
  \item[(H8)] $\liminf_{t \geq 0} b_{1}(t)+...+b_{n}(t) > 0$
  \item[(H9)] $\sup_{t\geq 0} [b_{-1}(t)+b_{1}(t)+...+b_{n}(t)] < \infty$
  \item[(H10)] $\Gamma(t) \to \infty$
  \item[(H11)] $M(t)\Gamma(t) \to \infty$.
\end{enumerate}

Hypotheses (H7)-(H9) are the analogous regularity conditions.
Hypotheses (H7) and (H8) prevent the process from tending towards a pure birth process or pure death
process, respectively.

Hypothesis (H9) bounds all moments of $X_{t}$, uniformly in $t$. Here we see an
implication of allowing only finitely many births: there is no option to bound just
finitely many moments and prove that the conditioned, scaled process converges
in distribution. Indeed, our analogue to Theorem 1 will only deal with convergence in moments.

Hypothesis (H9) also assures us the ability to approximate the $b_{k}$ by
continuous functions with arbitrarily small $L^{1}$ error. Lemma 1 will show
that we only need to care about the $L^{1}$ behavior of the rates and not the
pointwise behavior. Consequently, we will often treat the $b_{k}$ as though
they were continuous to differentiate integrals involving the $b_{k}$.

Hypotheses (H10) and (H11) are exactly analogous to the discrete setting: hypothesis (H10)
places the process out of the supercritical regime, and hypothesis (H11) places the process
out of the subcritical regime. Under our regularity assumptions, hypothesis (H10) is
actually equivalent to the familiar condition
  \begin{align*}
    \int_{0}^{\infty}\frac{1}{E(Z_{t})}dt = \infty.
  \end{align*}
Hypotheses (H10) and (H11) are necessary for exactly the same reasons that they
are necessary in the discrete setting.

In addition to hypotheses (H7)-(H11), we will make use of an estimate due to Cistjakov and Markova
\cite{CM}, Theorem 1:
  \begin{align*}
    \phi(t) = \dfrac{1}{\dfrac{1}{M(t)}+\displaystyle\int_{0}^{t}\dfrac{g_{s}(1-\phi(s,t))+\phi(s,t)g_{s}'(1)}
      {\phi(s,t)^{2}M(s)}ds}.
  \end{align*}
A second order expansion of $g_{s}$ (a sharper version of \cite{CM}, Theorem 2) yields
  \begin{equation}
    \frac{1}{\dfrac{1}{M(t)}+\displaystyle\int_{0}^{t}\dfrac{g_{s}''(1)}{2M(s)}ds}
      \leq \phi(t) \leq \frac{1}{\dfrac{1}{M(t)}+\displaystyle\int_{0}^{t}\dfrac{g_{s}''(1-\phi(s,t))}{2M(s)}ds}.
  \tag{4}
  \end{equation}
  
\subsection{Results} %%%%%%%%%%%%%%%
\label{Results2}
The main theorem we'll prove is the following:

\begin{theorem}
  If (H7)-(H11) hold, then
    \begin{align*}
      \frac{\zeta_{t}}{E(\zeta_{t})} \to \exp(1),
    \end{align*}
  where the convergence is in moments.
\end{theorem}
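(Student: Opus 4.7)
The plan is to lift the (H4b) portion of the proof of Theorem 1 to the continuous setting. What I need are three continuous analogues: extinction ($\phi(t)\to 0$, analogue of Theorem 2), the asymptotic equality $\phi(t)\Gamma(t)\to 1$ (analogue of Theorem 3), and the moment estimate $M_r(t)/(M(t)^r\Gamma(t)^{r-1})\to r!$ for each $r\geq 1$ (analogue of Theorem 4). Given these, the conditioning identity
\begin{align*}
E\left(\left(\frac{\zeta_t}{E(\zeta_t)}\right)^{r}\right) \;=\; \frac{M_r(t)\,\phi(t)^{r-1}}{M(t)^r} \;\longrightarrow\; r!,
\end{align*}
which is the $r$-th moment of $\exp(1)$, giving convergence in moments immediately (and, since $\exp(1)$ is determined by its moments, also in distribution).

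For the analogues of Theorems 2 and 3, I would first prove continuous versions of Lemmas 1 and 2: a uniform lower bound $\inf_{s\geq T,\,\epsilon\geq\epsilon_0}g_s''(\epsilon)>0$, which is essentially automatic from (H7)--(H9) since $g_s''$ is a finite polynomial whose coefficients are bounded above by (H9) and whose $k=-1$ term is bounded below by (H7); and a comparison $\Gamma(t)\leq C\int_0^t g_s''(1-\phi(s,t))/(2M(s))\,ds$, using the Cistjakov--Markova extinction result to guarantee $\phi(s,t)\to 0$ as $t-s\to\infty$. The proof of Theorem 3 then translates directly using (4) in place of (2): define $J(t)=\inf\{s:\phi(s,t)>\epsilon\eta/L\}$, apply the mean value theorem $g_s''(1-\phi(s,t))=g_s''(1)-g_s'''(\xi)\phi(s,t)$, and conclude $\phi(t)\Gamma(t)\to 1$. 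A small additional step is needed to reconcile the discrepancy between $g_s''(1)=E(X_s^2)+E(X_s)$ and the $E(X_s^2)$ appearing in the definition of $\Gamma$; under (H7)--(H11) the $E(X_s)$ contribution to the integral is of lower order. Extinction is then a corollary.

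For the analogue of Theorem 4, the substitute for the generating function recursion $f_{n+1}=f_n\circ g_n$ is the Kolmogorov forward equation. Binomial expansion of $(z+k)^r$ in the generator gives
\begin{align*}
M_r'(t) \;=\; \sum_{j=1}^{r}\binom{r}{j}E(X_t^j)\,M_{r-j+1}(t).
\end{align*}
Since $(M(t)^r)'=r\,E(X_t)\,M(t)^r$, dividing by $M(t)^r$ cancels the $j=1$ term and produces the integral equation
\begin{align*}
\frac{M_r(t)}{M(t)^r} \;=\; \frac{M_r(0)}{M(0)^r} \;+\; \sum_{j=2}^{r}\binom{r}{j}\int_0^t \frac{E(X_s^j)\,M_{r-j+1}(s)}{M(s)^r}\,ds,
\end{align*}
the continuous analogue of (3). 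Inducting on $r$, the inductive hypothesis $M_{r-1}(s)/M(s)^{r-1}\sim (r-1)!\,\Gamma(s)^{r-2}$ together with $\Gamma'(s)=E(X_s^2)/(2M(s))$ transforms the $j=2$ integral into the leading contribution
\begin{align*}
\binom{r}{2}(r-1)!\int_0^t 2\Gamma'(s)\Gamma(s)^{r-2}\,ds \;=\; \binom{r}{2}(r-1)!\cdot\frac{2}{r-1}\,\Gamma(t)^{r-1} \;=\; r!\,\Gamma(t)^{r-1},
\end{align*}
while each $j\geq 3$ term acquires an additional factor $1/M(s)^{j-2}$ that, under (H11), forces it to be of lower order.

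The main obstacle is this last point: making precise that each $j\geq 3$ integral is $o(\Gamma(t)^{r-1})$. In discrete time, Theorem 4 packages everything into two recursive sums with coefficients $C_{n,k}$ that are uniformly bounded by (H4b), and the comparison is transparent; in the continuous case one must show $\int_0^t \Gamma(s)^{r-j}/M(s)^{j-1}\,ds=o(\Gamma(t)^{r-1})$ from $M(t)\Gamma(t)\to\infty$ alone. The cleanest route is to use $1/M(s)\leq (2/c)\Gamma'(s)$, where $c=\liminf_s E(X_s^2)>0$ by (H7)--(H9), to convert one factor of $1/M$ into $\Gamma'$ and then split the integration at a large $s_0$ so that the remaining $1/M(s)^{j-2}$ is uniformly small on $[s_0,t]$. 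Beyond this bookkeeping no new ideas are required, and the proof concludes exactly as in Part 4 of the proof of Theorem 1.
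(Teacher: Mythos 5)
Your proposal follows the paper's own route essentially verbatim: the moment equation $\dot{M}_r(t)=\sum_{j=1}^{r}\binom{r}{j}E(X_t^j)M_{r-j+1}(t)$ is the paper's Lemma 3, the cancellation of the $j=1$ term after dividing by $M^r$, the dominance of the $j=2$ term under (H11), and the identification of the leading integrand with $r!\,(\Gamma^{r-1})'$ reproduce the paper's proof of Theorem 9, the reconciliation of $g_s''(1)=E(X_s^2)+E(X_s)$ with the $E(X_s^2)$ in $\Gamma$ is exactly Lemma 4, and the final assembly via $E\bigl((\zeta_t/E(\zeta_t))^r\bigr)=M_r(t)\phi(t)^{r-1}/M(t)^r$ together with $\phi(t)\Gamma(t)\to 1$ is the paper's proof of Theorem 6. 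The one point you pass over lightly is the uniform lower bound on $1-\phi(s,t)$ needed for the comparison $\Gamma(t)\leq C\int_0^t g_s''(1-\phi(s,t))/(2M(s))\,ds$: the paper obtains it non-circularly by dominating with a constant-rate process (Lemma 5), which only works for $t\geq s+1$, and then discards the $[t-1,t]$ sliver via Lemma 6 --- a detail, but one you would need to supply within the same architecture.
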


The proof relies on four lemmas and three smaller theorems.

\begin{theorem}
  If (H7)-(H11) hold, then extinction occurs with probability one.
\end{theorem}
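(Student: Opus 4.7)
The strategy mirrors the proof of the discrete Theorem 2. Starting from the upper bound in (4),
\begin{align*}
\phi(t) \leq \left(\frac{1}{M(t)} + \int_{0}^{t}\frac{g_{s}''(1-\phi(s,t))}{2M(s)}\, ds\right)^{-1},
\end{align*}
it suffices to show the denominator tends to infinity. I would prove this via a continuous analogue of Lemma 2: there exist $C, T > 0$ with
\begin{align*}
\Gamma(t) \leq C \int_{0}^{t} \frac{g_{s}''(1 - \phi(s,t))}{2M(s)}\, ds \qquad \text{for all } t \geq T.
\end{align*}
Given this, (H10) forces the right-hand integral to blow up, (H11) renders $1/M(t)$ negligible against $\Gamma(t)$, and the desired conclusion $\phi(t) \to 0$ follows.

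The linchpin is a uniform lower bound on the single-particle extinction probability over a bounded time horizon: there exist $T_{0}, c_{0} > 0$ such that $1 - \phi(s, t) \geq c_{0}$ for all sufficiently large $s$ and every $t \geq s + T_{0}$. I would obtain this by estimating from below the probability that a particle born at time $s$ dies before reproducing, and does so within $[s, s + T_{0}]$. By (H7) or (H8), the total event rate $b_{-1}(u) + b_{1}(u) + \ldots + b_{n}(u)$ is uniformly bounded below, so with probability at least $1 - e^{-c T_{0}}$ a first event occurs within $T_{0}$; and by (H7) and (H9) the ratio $b_{-1}(u) / (b_{-1}(u) + \sum_{k=1}^{n} b_{k}(u))$ is uniformly bounded below, so that event is a death with positive probability. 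Monotonicity of $\phi(s, \cdot)$ (once extinct, always extinct) then extends the bound to all $t \geq s + T_{0}$. Since $g_{s}''(x) = \sum_{k=1}^{n} k(k+1) b_{k}(s) x^{k-1}$ is nondecreasing on $[0,1]$ and satisfies $g_{s}''(c_{0}) \geq c_{0}^{n-1} g_{s}''(1) \geq 2 c_{0}^{n-1} \sum_{k=1}^{n} b_{k}(s)$, which is bounded below by (H8), this yields $g_{s}''(1 - \phi(s, t)) \geq c_{1} > 0$ uniformly in the relevant range of $s$.

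To close the argument, observe that under (H7)--(H9) one has $\Gamma(t) \asymp \int_{N}^{t} 1/M(s)\, ds + O(1)$, since $E(X_{s}^{2}) \geq b_{-1}(s)$ is bounded below by (H7) and above by (H9). The ODE $M'(s) = E(X_{s}) M(s)$ and the uniform bound on $|E(X_{s})|$ in (H9) force $M(s)/M(t) \in [e^{-C T_{0}}, e^{C T_{0}}]$ for $s \in [t - T_{0}, t]$, so $\int_{t - T_{0}}^{t} 1/M(s)\, ds = O(1/M(t)) = o(\Gamma(t))$ by (H11). Hence the restriction of the integral to $[N, t - T_{0}]$ retains the full asymptotic mass of $\Gamma(t)$, and the integrand lower bound from the previous paragraph delivers $\int_{0}^{t} g_{s}''(1 - \phi(s,t))/(2M(s))\, ds \gtrsim \Gamma(t)(1 - o(1))$, completing the continuous analogue of Lemma 2. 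The principal obstacle is the uniform extinction bound itself: whereas the discrete proof leverages (H3) as a one-step bound on $P(X_{n} = 0)$, the continuous one-step analogue $\phi(s, s) = 1$ is vacuous, so one must explicitly race death against reproduction over a finite horizon while tracking uniformity in $s$.
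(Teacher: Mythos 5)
Your proposal is correct and its overall architecture coincides with the paper's: bound $g_{s}''(1-\phi(s,t))$ below uniformly for $s$ away from a boundary layer near $s=t$, show that boundary layer contributes negligibly to the integral, conclude that the denominator in (4) is $\gtrsim \Gamma(t)$, and invoke (H10). The one place you genuinely diverge is the linchpin itself, the uniform bound $1-\phi(s,t)\geq c_{0}$ for $t\geq s+T_{0}$ (the paper's Lemma 5). The paper gets this by comparison: it dominates $Z$ (for $s\geq T$) by a \emph{time-homogeneous} birth-and-death process with constant death rate $\inf b_{-1}$ and constant birth rates $\sup\sum_k b_k$, whose survival probability depends only on the elapsed time, and reads off $\eta=1-\tilde\phi(1)$. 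You instead race death against reproduction directly: conditioning on the first event time, $P(\text{first event in }[s,s+T_0]\text{ and it is a death})\geq \delta(1-e^{-cT_{0}})$ using (H7) for the lower bound on $b_{-1}$ and (H9) for the upper bound on the total rate, then use monotonicity of the extinction probability in $t$. Your route is more elementary and self-contained (no appeal to the classical constant-rate theory), at the cost of an explicit competing-risks computation; the paper's is shorter given Kendall's results but requires the monotone-coupling step. Your handling of the lost interval $[t-T_{0},t]$ via the ODE $\dot M=E(X_t)M$ and (H11) is exactly the content of the paper's Lemma 6 ($\Gamma(t)-\Gamma(t-1)=o(\Gamma(t))$), just done inline. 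Note also that your argument only yields the upper half of the two-sided bound $1\leq\liminf\phi(t)\Gamma(t)\leq\limsup\phi(t)\Gamma(t)\leq C$ that the paper records here for later use in Theorem 8, but the upper half is all that extinction requires.
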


Theorem 7 will be proven in the same way that Theorem 2 was in the discrete setting,
and likewise, Theorem 8 will be proven as Theorem 3 was.

\begin{theorem}
  If (H7)-(H11) hold, then $\phi(t)\Gamma(t) \to 1$.
\end{theorem}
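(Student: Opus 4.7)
The plan is to mimic the proof of Theorem 3 step by step, using the Cistjakov--Markova bound (4) in place of Agresti's (2) and applying Theorem 7 to shifted processes in place of the discrete Lemma-2 estimate. The easy direction is $\liminf_{t\to\infty}\phi(t)\Gamma(t) \geq 1$, from the lower bound of (4). Using the Kolmogorov forward equation $M'(s) = E(X_s)M(s)$, one computes
\[
\int_0^t \frac{g_s''(1)}{2M(s)}\,ds \;=\; \Gamma(t) + \int_0^t \frac{E(X_s)}{2M(s)}\,ds \;=\; \Gamma(t) + \tfrac{1}{2} - \frac{1}{2M(t)},
\]
which equals $\Gamma(t)(1+o(1))$ since $\Gamma(t)\to\infty$ by (H10) and $1/M(t)=o(\Gamma(t))$ by (H11); the stray $1/M(t)$ term outside the integral in (4) is also $o(\Gamma(t))$, so the entire denominator is $\Gamma(t)(1+o(1))$.

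For the hard direction, $\limsup_{t\to\infty}\phi(t)\Gamma(t) \leq 1$, I would parallel the three steps of Theorem 3. First, apply Theorem 7's quantitative bound to the shifted process $\{Z_t \mid Z_s = 1\}$, whose mean at time $t$ is $M(t)/M(s)$ and whose $\Gamma$-quantity is $M(s)(\Gamma(t)-\Gamma(s))$, to obtain
\[
\phi(s,t) \;\leq\; \frac{C}{M(s)\bigl(\Gamma(t)-\Gamma(s)\bigr)}
\]
for some constant $C$ and large $t$. Second, fix $\varepsilon > 0$: using $g_s''(1) = \sum_{k\geq 1} k(k+1)b_k(s) \geq 2\sum_{k\geq 1} b_k(s)$ together with (H8), pick $T$ and $\eta>0$ with $g_s''(1)\geq \eta$ for $s\geq T$; using (H9), pick $L$ with $|g_s'''(\xi)|\leq L$ for all $s$ and $\xi\in[0,1]$. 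Define $J(t) = \inf\{s\geq 0 : \phi(s,t) > \varepsilon\eta/L\}$; since $\phi(s,t)\to 0$ as $t\to\infty$ for each fixed $s$, $J(t)\to\infty$. Third, for $s\in[T,J(t))$ a Taylor expansion at $1$ gives
\[
g_s''(1-\phi(s,t)) \;=\; g_s''(1) - g_s'''(\xi)\phi(s,t) \;\geq\; g_s''(1)\bigl(1 - L\phi(s,t)/\eta\bigr) \;\geq\; g_s''(1)(1-\varepsilon),
\]
and substituting into the upper bound of (4) yields $\phi(t)\Gamma(t) \leq \Gamma(t)/[(1-\varepsilon)\Gamma(J(t)) + o(\Gamma(t))]$.

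It remains to show $\Gamma(J(t))/\Gamma(t)\to 1$. Since $\Gamma$ and $M$ are continuous, I can choose $s_n \downarrow J(t)$ with $\phi(s_n,t) > \varepsilon\eta/L$, apply step~1 to get $\varepsilon\eta/L < C/[M(s_n)(\Gamma(t)-\Gamma(s_n))]$, and let $n\to\infty$ to conclude
\[
\frac{\Gamma(t)-\Gamma(J(t))}{\Gamma(t)} \;\leq\; \frac{CL}{\varepsilon\eta\, M(J(t))\,\Gamma(J(t))} \;\longrightarrow\; 0
\]
by (H11) since $J(t)\to\infty$. Sending $\varepsilon \to 0$ then completes the proof. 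The main technical obstacle I anticipate is verifying that Theorem 7's quantitative bound transfers cleanly to every shifted process: the regularity hypotheses (H7)--(H9) are inherited pointwise, and (H10), (H11) persist via $M(s)(\Gamma(t)-\Gamma(s))\to\infty$ and $M(t)(\Gamma(t)-\Gamma(s)) = M(t)\Gamma(t)(1-\Gamma(s)/\Gamma(t))\to\infty$, but one must also check that the constant $C$ can be taken uniformly across shift times $s$, which is the counterpart of the uniform bound buried in the proof of Lemma~2 in the discrete case.
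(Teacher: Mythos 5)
Your proposal is correct and follows essentially the same route as the paper: the lower bound via (4) and the computation $\int_0^t g_s''(1)/(2M(s))\,ds = \Gamma(t)+\tfrac12-\tfrac{1}{2M(t)}$ is the paper's Lemma 4, and the upper bound via the shifted-process estimate $\phi(s,t)\le C/[M(s)(\Gamma(t)-\Gamma(s))]$, the cutoff time $J(t)$ (the paper's $S(t)$), the mean-value/Taylor bound $g_s''(1-\phi(s,t))\ge(1-\varepsilon)g_s''(1)$, and the final estimate $(\Gamma(t)-\Gamma(J(t)))/\Gamma(t)\le CL/(\varepsilon\eta\,M(J(t))\Gamma(J(t)))\to 0$ is exactly the paper's argument. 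Your closing remark about verifying that the constant $C$ transfers uniformly to all shifted processes is a legitimate point that the paper itself passes over silently.
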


Whereas Theorem 4 was proven using recurrence relations between generating
functions, Theorem 9 will be proven using recurrence relations between the moments of $Z_{t}$;
specifically, the relations in the forthcoming Lemma 3.

\begin{theorem}
  If (H7)-(H11) hold, then
    \begin{align*}
      \frac{M_{r}(t)}{M(t)^{r}\Gamma(t)^{r-1}} \to r! \qquad (r \geq 1).
    \end{align*}
\end{theorem}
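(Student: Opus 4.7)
The proof will mimic the strategy of Theorem 4, with a moment ODE replacing the generating-function recurrence. The starting point is a recursion for $M_r(t)$: applying the infinitesimal generator $L_t h(n) = n\sum_k b_k(t)(h(n+k)-h(n))$ to $h(n) = n^r$, expanding $(n+k)^r - n^r$ binomially, and taking expectations gives (this is the content of the forthcoming Lemma 3)
\begin{align*}
\frac{d}{dt}M_r(t) = r\,E(X_t)\,M_r(t) + \sum_{l=1}^{r-1}\binom{r}{l-1}E(X_t^{r-l+1})\,M_l(t).
\end{align*}
Since $M'(t) = E(X_t)M(t)$, multiplying by the integrating factor $M(t)^{-r}$ cancels the first term on the right; integrating from $0$ to $t$ and using $M_r(0)=1$ then yields
\begin{align*}
\frac{M_r(t)}{M(t)^r} = 1 + \sum_{l=1}^{r-1}\binom{r}{l-1}\int_0^t\frac{E(X_s^{r-l+1})\,M_l(s)}{M(s)^r}\,ds.
\end{align*}

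The plan is to induct on $r$. The base case $r=1$ is immediate since $M_1(t) = M(t)$ and $\Gamma(t)^0 = 1$. For the inductive step, assume $M_k(s) = k!\,M(s)^k\Gamma(s)^{k-1}(1+o(1))$ for all $k \leq r-1$, and substitute into the integral identity; the $l$-th summand becomes $\binom{r}{l-1}\,l!\int_0^t E(X_s^{r-l+1})\,\Gamma(s)^{l-1}/M(s)^{r-l}\,(1+o(1))\,ds$. The dominant contribution is $l = r-1$: recognizing $\Gamma'(s) = E(X_s^2)/(2M(s))$ and applying the chain rule to $\Gamma(s)^{r-1}$,
\begin{align*}
\binom{r}{r-2}(r-1)!\int_0^t E(X_s^2)\,\frac{\Gamma(s)^{r-2}}{M(s)}\,ds
= \frac{r(r-1)}{2}\cdot(r-1)!\cdot\frac{2}{r-1}\,\Gamma(t)^{r-1}
= r!\,\Gamma(t)^{r-1},
\end{align*}
which is exactly the target.

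The main obstacle is showing the sub-dominant terms ($l \leq r-2$) contribute only $o(\Gamma(t)^{r-1})$. The pointwise ratio of the $l$-th integrand to the leading one is $\frac{E(X_s^{r-l+1})}{E(X_s^2)}\cdot\frac{1}{(M(s)\Gamma(s))^{r-l-1}}$. Since $X_s$ is supported on a fixed finite range of integers, (H9) bounds every moment $E(X_s^j)$ uniformly from above; and (H7) forces $E(X_s^2) \geq b_{-1}(s)$ away from $0$, so the first factor is bounded. Because $r-l-1 \geq 1$, hypothesis (H11) drives the second factor to $0$. The standard fact that $a(s)/b(s) \to 0$ together with $\int_0^t b(s)\,ds \to \infty$ imply $\int_0^t a(s)\,ds = o(\int_0^t b(s)\,ds)$ then yields the desired negligibility; the same fact absorbs the $(1+o(1))$ factor inherited from the inductive hypothesis and handles the harmless constant $1$ on the right-hand side (which is finite, hence $o(\Gamma(t)^{r-1})$ by (H10)), completing the induction.
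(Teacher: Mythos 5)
Your proposal is correct and follows essentially the same route as the paper's proof: induction on $r$ using the moment recursion of Lemma 3, cancellation of the first-order term via the integrating factor $M(t)^{-r}$, identification of the $E(X_s^2)$ term as dominant via (H11), and comparison with $\frac{d}{ds}\Gamma(s)^{r-1}$. The only difference is presentational --- you work with the integrated identity from the start, which makes explicit the ``integrate an asymptotic relation'' step that the paper performs implicitly when it passes from $(M_r/M^r)' = r!(\Gamma^{r-1})' + o((\Gamma^{r-1})')$ to the conclusion.
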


Theorems 7-9 will be enough to prove Theorem 6. Once again, we end with
an additional theorem that allows us to substitute hypothesis (H11) for a stronger
but more intuitive condition:

\begin{theorem}
  Given (H7)-(H10), if $E(X_{t}) \to 0$ then (H11) holds.
\end{theorem}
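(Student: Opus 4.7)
The plan is to mirror the proof of Theorem 5 almost verbatim in the continuous setting, with the mean ODE $M'(t)=E(X_t)M(t)$ playing the role of the discrete recurrence $m_{n+1}=\mu_n m_n$.

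First I would use (H7) and (H9) to show that $\Gamma(t)$ is asymptotically comparable to $\int_0^t \tfrac{ds}{M(s)}$. Since $E(X_s^2)=b_{-1}(s)+\sum_{k=1}^{n}k^2 b_k(s)$, hypothesis (H9) gives a uniform upper bound $E(X_s^2)\le n^2 C$, while (H7) gives $E(X_s^2)\ge b_{-1}(s)\ge c>0$ for all $s$ beyond some $T_0$. Consequently there exist constants $C',C''>0$ with
$$C' \int_{T_0}^{t} \frac{ds}{M(s)} \;\le\; \Gamma(t) \;\le\; \Gamma(T_0) + C'' \int_{T_0}^{t} \frac{ds}{M(s)},$$
so (H10) forces $\int_0^t \tfrac{ds}{M(s)}\to \infty$, and the claim reduces to showing $M(t)\int_0^t \tfrac{ds}{M(s)}\to\infty$.

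Next, from $M'(t)=E(X_t)M(t)$ we get $\tfrac{d}{dt}\tfrac{1}{M(t)}=-\tfrac{E(X_t)}{M(t)}$, and integration from $0$ yields the continuous analogue of the discrete identity $\tfrac{1}{m_n}=1+\sum_j\tfrac{1-\mu_j}{m_{j+1}}$, namely
$$\frac{1}{M(t)} \;=\; 1 - \int_0^t \frac{E(X_s)}{M(s)}\,ds, \qquad \text{whence} \qquad \frac{1}{M(t)} \;\le\; 1 + \int_0^t \frac{|E(X_s)|}{M(s)}\,ds.$$
Given $\epsilon>0$, I would use $E(X_t)\to 0$ to pick $N$ with $|E(X_s)|<\epsilon$ for all $s\ge N$. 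On $[0,N]$, $M(s)=\exp\bigl(\int_0^s E(X_u)\,du\bigr)$ is bounded above and below (since $|E(X_u)|$ is bounded by (H9)), so $C_N:=1+\int_0^N \tfrac{|E(X_s)|}{M(s)}\,ds<\infty$ and the bound becomes
$$\frac{1}{M(t)} \;\le\; C_N + \epsilon \int_0^t \frac{ds}{M(s)}.$$

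Dividing through by $\int_0^t \tfrac{ds}{M(s)}$ and sending $t\to\infty$ gives $\limsup_{t\to\infty} \tfrac{1}{M(t)\int_0^t ds/M(s)}\le\epsilon$, and since $\epsilon$ was arbitrary this limsup is zero. Combined with the comparison from the first step, this yields $M(t)\Gamma(t)\to\infty$, which is (H11). I do not expect any significant obstacle here: every step translates cleanly from the proof of Theorem 5, and the only genuinely new ingredient is the use of the mean ODE in place of the multiplicative recurrence for $m_n$.
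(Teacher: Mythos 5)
Your proposal is correct and follows essentially the same route as the paper: the identity $\tfrac{1}{M(t)}=1-\int_0^t \tfrac{E(X_s)}{M(s)}\,ds$, the two-sided comparison of $\Gamma(t)$ with $\int_0^t \tfrac{ds}{M(s)}$ via uniform bounds on $E(X_s^2)$, and the split of the integral at a time beyond which $|E(X_s)|<\epsilon$. The only cosmetic difference is that you use (H7) (the death rate) rather than (H8) (the birth rates) to bound $E(X_s^2)$ from below; both work.
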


\subsection{Four Lemmas} %%%%%%%%%%%%%%%
\label{Four Lemmas}

\begin{lemma}
  \begin{align*}
    \dot{M}_{r}(t) = \sum_{j=1}^{r}\binom{r}{j}M_{r-j+1}(t)E(X_{t}^{j}).
  \end{align*}
\end{lemma}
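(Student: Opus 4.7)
The plan is to derive the ODE by a direct Kolmogorov-forward calculation: form the finite difference $M_{r}(t+\Delta)-M_{r}(t)$ from the infinitesimal transition probabilities displayed in the notation section, then send $\Delta \to 0$. No generating functions are needed here; the identity is a routine moment recursion obtained by conditioning and binomial expansion.

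First I would condition on $Z_{t}$ and write
$E(Z_{t+\Delta}^{r} \mid Z_{t}) = (1+Z_{t}\Delta b_{0}(t))Z_{t}^{r} + \sum_{k \in \{-1,1,\ldots,n\}}(Z_{t}+k)^{r}\, Z_{t}\Delta b_{k}(t) + o(\Delta)$.
Using $b_{0}(t)=-\sum_{k\neq 0}b_{k}(t)$, the two $Z_{t}^{r}$ contributions combine so that
$E(Z_{t+\Delta}^{r} \mid Z_{t}) - Z_{t}^{r} = \Delta\, Z_{t}\sum_{k \neq 0} b_{k}(t)\bigl[(Z_{t}+k)^{r} - Z_{t}^{r}\bigr] + o(\Delta)$.
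Next I would apply the binomial expansion $(Z_{t}+k)^{r}-Z_{t}^{r} = \sum_{j=1}^{r}\binom{r}{j}Z_{t}^{r-j}k^{j}$ and swap the order of summation; the inner sum $\sum_{k \neq 0}k^{j}b_{k}(t)$ is exactly $E(X_{t}^{j})$ by the notation introduced earlier. Taking expectation over $Z_{t}$, dividing by $\Delta$, and letting $\Delta\to 0$ yields the claimed formula.

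The one place where genuine care is required is the $o(\Delta)$ remainder. The derivation above is pointwise in $Z_{t}$, but I need the error to remain $o(\Delta)$ after multiplication by powers of $Z_{t}+k$ and after integration against the law of $Z_{t}$. Hypothesis (H9) bounds all rates $b_{k}(t)$ uniformly in $t$ and the jump sizes are uniformly bounded by $n$, so conditional on $Z_{t}=m$ the remainder is $O(m^{r+1}\Delta^{2})$. Thus I need $M_{r+1}(t)<\infty$ for each $r$, which can be established inductively from this very recursion together with a Gronwall argument, after which the limit-expectation exchange is justified. Everything else in the proof is bookkeeping.
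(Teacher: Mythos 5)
Your proof is correct and follows essentially the same route as the paper's: condition on $Z_{t}$, expand $(Z_{t}+k)^{r}$ binomially, use $b_{0}=-\sum_{k\neq 0}b_{k}$ to cancel the $j=0$ terms, swap the order of summation to produce $E(X_{t}^{j})$, take expectations, and pass to the limit in $\Delta$. Your extra care about the uniformity of the $o(\Delta)$ remainder and the finiteness of $M_{r+1}(t)$ addresses a point the paper passes over silently; it is a refinement of the same argument rather than a different one.
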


This recurrence relation between the moments of the process will replace the
recurrence relation between generating functions in the discrete setting.
Lemma 3 implies in particular that  $M(t) = \exp(\int_{0}^{t}E(X_{s})ds)$, an equality
we used when transcribing (4).

  \begin{proof}
    The probability of two changes in $Z_{t}$ happening in a time span of $\Delta$ is
    $o(\Delta)$, so we calculate
      \begin{align*}
        E(Z_{t+\Delta}^{r}|Z_{t}) & = \sum_{k=-1}^{n}(Z_{t}+k)^{r}P(Z_{t+\Delta}=Z_{t}+k \mid Z_{t})
          + o(\Delta) \\
        & = Z_{t}^{r}(1+Z_{t}\Delta b_{0})\sum_{k=-1,1,...,n} (Z_{t}+k)^{r}Z_{t}\Delta b_{k} + o(\Delta) \\
        &= Z_{t}+ Z_{t}\Delta\sum_{k=-1}^{n}(Z_{t}+k)^{r} b_{k} + o(\Delta).
      \end{align*}
    Then, rearranging and sending $\Delta \to 0$,
      \begin{align*}
        \frac{E(Z_{t+\Delta}^{r}-Z_{t}^{r}\mid Z_{t})}{\Delta} & = Z_{t}\sum_{k=-1}^{n}(Z_{t}+k)^{r}b_{k}+o(\Delta) \\
        \dot{M}_{r} \mid Z_{t} & = Z_{t}\sum_{k=-1}^{n}(Z_{t}+k)^{r}b_{k} \\
        & = \sum_{k=-1}^{n} b_{k}\sum_{j=0}^{r}\binom{r}{j}Z_{t}^{r-j+1}k^{j}.
      \end{align*}
    The entire $k=0$ term, which is just $b_{0}Z_{t}^{r+1}$, cancels all of the $j=0$ terms from
    the other choices of $k$, so that we're left with
      \begin{align*}
        \dot{M}_{r} \mid Z_{t} & = \sum_{k=-1,1,...,n} b_{k}\sum_{j=1}^{r}\binom{r}{j}Z_{t}^{r-j+1}k^{j}.
      \end{align*}
    Taking expectations of both sides,
      \begin{align*}
        \dot{M}_{r} & = \sum_{k=-1,1,...,n} b_{k}\sum_{j=1}^{r}\binom{r}{j}M_{r-j+1}k^{j} \\
        & = \sum_{j=1}^{r}\binom{r}{j}M_{r-j+1}\sum_{k=-1,1,...,n}b_{k}k^{j} \\
        & = \sum_{j=1}^{r}\binom{r}{j}M_{r-j+1}E(X_{t}^{j}),
      \end{align*}
    as desired.
  \end{proof}

\begin{lemma}
  Hypotheses (H10) and (H11) imply that $\Gamma(t)$ is asymptotically equal to
    \begin{align*}
      \int_{0}^{t}\frac{g_{s}''(1)}{2M(s)}ds.
    \end{align*}
\end{lemma}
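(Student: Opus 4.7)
The plan is to compute the difference $\int_{0}^{t}\frac{g_{s}''(1)}{2M(s)}\,ds - \Gamma(t)$ in closed form and show that (H10) and (H11) together force it to be $o(\Gamma(t))$. From the relation $g_{s}''(1) = E(X_{s}^{2}) + E(X_{s})$ recorded in the Notation subsection, the $E(X_{s}^{2})$ contributions cancel against $\Gamma(t)$ and
\[
\int_{0}^{t}\frac{g_{s}''(1)}{2M(s)}\,ds - \Gamma(t) \;=\; \int_{0}^{t}\frac{E(X_{s})}{2M(s)}\,ds.
\]

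Next I would invoke Lemma 3 with $r=1$, which gives $\dot{M}(s) = M(s)\,E(X_{s})$, so $E(X_{s})/M(s) = \dot{M}(s)/M(s)^{2}$. Recognizing the integrand as an exact derivative, $\frac{d}{ds}\!\left(-\frac{1}{2M(s)}\right) = \frac{\dot{M}(s)}{2M(s)^{2}}$, and using $M(0) = E(Z_{0}) = 1$, one obtains the closed form
\[
\int_{0}^{t}\frac{E(X_{s})}{2M(s)}\,ds \;=\; \frac{1}{2} - \frac{1}{2M(t)}.
\]
Thus $\int_{0}^{t}\frac{g_{s}''(1)}{2M(s)}\,ds = \Gamma(t) + \tfrac{1}{2} - \tfrac{1}{2M(t)}$.

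To finish, the constant $\tfrac{1}{2}$ is $o(\Gamma(t))$ by (H10), and $\tfrac{1}{2M(t)} = o(\Gamma(t))$ by (H11), which precisely says $M(t)\Gamma(t) \to \infty$. Combined, the error is $o(\Gamma(t))$, yielding the claimed asymptotic equality. The only non-routine observation is that the residual integral collapses to a logarithmic derivative of $M$; once that is seen, the roles of (H10) (absorbing the additive constant) and (H11) (absorbing the $1/M(t)$ term) are immediate, so there is no real obstacle.
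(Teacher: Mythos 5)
Your proof is correct and follows essentially the same route as the paper: decompose $g_{s}''(1)=E(X_{s}^{2})+E(X_{s})$, evaluate the residual integral exactly as $\tfrac{1}{2}-\tfrac{1}{2M(t)}$ via $\dot{M}=M\,E(X_{t})$, and absorb the two error terms using (H10) and (H11). Your version is in fact slightly more explicit than the paper's in attributing the constant term to (H10) and the $1/M(t)$ term to (H11).
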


  \begin{proof}
    Using Lemma 3, we calculate
      \begin{align*}
        \int_{0}^{t}\frac{g_{s}''(1)}{2M(s)} & = \int_{0}^{t} \dfrac{E(X_{s}^{2})+E(X_{s})}{2M(s)} ds \\
        & = \Gamma(t) - \int_{0}^{t} \frac{-E(X_{s})}{2M(s)} ds \\
        & = \Gamma(t) - \int_{0}^{t}\frac{d}{ds}\frac{1}{2M(s)}ds \\
        & = \Gamma(t) - \frac{1}{2M(t)} + \frac{1}{2}.
      \end{align*}
    Hypotheses (H10) and (H11) tell us that this last expression is dominated by the
    $\Gamma(t)$ term at large times, as desired.
  \end{proof}

\begin{lemma}
  Hypotheses (H7) and (H9) imply that there exist $T,\eta>0$ such that for all $s \geq T$
  and $t \geq s+1$, we have $1-\phi(s,t) \geq \eta$.
\end{lemma}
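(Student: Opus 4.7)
The plan is to show that the extinction event $\{Z_t = 0 \mid Z_s = 1\}$ contains a specific sub-event with positive probability bounded away from zero: namely, the event that the single initial particle present at time $s$ dies (without ever reproducing) before time $s+1$. Since $t \geq s+1$, once this sub-event occurs the population stays at $0$ forever, so its probability lower-bounds $1 - \phi(s,t)$.

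First, I would translate the infinitesimal description into a clean expression for this sub-event. A single particle alive at time $s$ experiences events at total instantaneous rate $B(u) := b_{-1}(u) + b_1(u) + \cdots + b_n(u)$, and conditioned on an event occurring at time $u$ it is a death with probability $b_{-1}(u)/B(u)$. Thus the probability that the particle's first event is a death and occurs in the interval $[s,s+1]$ is
\begin{align*}
    \int_s^{s+1} b_{-1}(u)\exp\!\left(-\int_s^u B(v)\,dv\right) du.
\end{align*}
By (H9), pick $B_{\max}$ with $\sup_t B(t) \leq B_{\max}$, so the exponential factor is at least $e^{-B_{\max}}$ on $[s,s+1]$. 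By (H7), pick $T$ and $\beta>0$ such that $b_{-1}(u) \geq \beta$ for all $u \geq T$. Then for $s \geq T$ the integral is at least $\beta e^{-B_{\max}}$, and this lower bound holds uniformly in $t \geq s+1$ because the sub-event does not depend on $t$. Setting $\eta := \beta e^{-B_{\max}}$ gives $1 - \phi(s,t) \geq \eta$ as required.

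The only real subtlety is the passage from the infinitesimal transition probabilities to the first-event description used above; this is a standard construction of the process as a time-inhomogeneous pure-jump Markov chain driven by the instantaneous rates, and once we restrict attention to the trajectory of a single particle with no descendants (i.e.\ we simply wait for that particle's first birth or death), the usual competing-exponentials argument applies with time-dependent rates. This is the only step that requires any care; the remaining estimates are just the uniform bounds from (H7) and (H9).
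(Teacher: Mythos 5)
Your proof is correct, but it takes a different route from the paper. The paper reduces to $t=s+1$ by monotonicity of the extinction probability in $t$, then compares $Z_t$ with an auxiliary \emph{time-homogeneous} birth-and-death process $\tilde Z_t$ whose constant death rate $C=\inf_{x\ge T}b_{-1}(x)$ is smaller and whose constant birth rates $D=\sup_{x\ge T}\sum_k b_k(x)$ are larger, so that $\tilde Z$ dominates $Z$; it then takes $\eta = 1-\tilde\phi(1)$, the (time-independent) extinction probability of the constant-rate process over a unit interval. You instead exhibit an explicit sub-event of extinction --- the initial particle's first jump is a death and it happens in $[s,s+1]$ --- and bound its probability directly by
$\int_s^{s+1} b_{-1}(u)\exp\bigl(-\int_s^u B(v)\,dv\bigr)\,du \ge \beta e^{-B_{\max}}$,
using (H7) for $\beta$ and (H9) for $B_{\max}$. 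Your version is more self-contained: the paper's argument implicitly needs both a stochastic-domination/coupling step (monotonicity of survival probability in the rates) and the fact that the constant-rate process goes extinct in unit time with positive probability, and the latter is established by essentially your computation (or Kendall's explicit formula). The paper's comparison argument, on the other hand, packages the time-inhomogeneity away immediately and would adapt to situations where one wants more than a crude lower bound. The only step in your writeup that deserves the care you flag is the identification of the first-jump density of a single particle from the infinitesimal rates, which is standard; everything else is airtight.
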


Lemma 5 says that once we reach a point where hypothesis (H7) kicks in, we
expect to have some fixed minimal probability of extinction after a fixed time increment.

  \begin{proof}
    For any fixed $s$, we know that $\phi(s,t)$ is increasing in $t$. Thus it suffices to prove
    the claim for $t=s+1$. By hypothesis (H7), pick $T$ large enough that
      \begin{align*}
        C \equiv \inf_{x \geq T} b_{-1}(x)
      \end{align*}
    is nonzero, and let $D \equiv \sup_{x \geq T} b_{1}(x)+...+b_{n}(t)$, which by hypothesis
    (H9) is finite. Define a new birth-and-death process $\tilde{Z}_{t}$ with constant death rate
    $\tilde{b}_{-1} = C$ and birth rates $\tilde{b}_{1} = ... = \tilde{b}_{n} = D$. Then
    for all times greater than $T$, $\tilde{Z}_{t}$ has a smaller death rate and higher birth rates
    than $Z_{t}$, and so the corresponding probability of survival $\tilde{\phi}(s,t)$ is smaller than
    $\phi(s,t)$. Since the rates of $\tilde{Z}_{t}$ are constant, we know that
    $\tilde{\phi}(s,t) = \tilde{\phi}(t-s)$ depends only on the time elapsed, and so picking
    $\eta = 1-\tilde{\phi}(1)$ gives our desired uniform bound.
  \end{proof}
  
\begin{lemma}
  Hypotheses (H9) and (H11) implies that $\Gamma(t)-\Gamma(t-1)$ is $o(\Gamma(t))$. 
\end{lemma}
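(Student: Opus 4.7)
The plan is to write $\Gamma(t)-\Gamma(t-1)$ as the one-unit integral $\int_{t-1}^{t}\tfrac{E(X_s^2)}{2M(s)}\,ds$ and bound the integrand uniformly. Hypothesis (H9) forces both $E(X_s^2)=\sum_{k=-1,1,\dots,n} k^2 b_k(s)$ and $|E(X_s)|=|\sum_{k=-1,1,\dots,n} k\,b_k(s)|$ to be bounded by a single constant $K$ independent of $s$, since only finitely many values of $k$ appear and $\sup_s \sum_k b_k(s)<\infty$.

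The key geometric step is to compare $M(s)$ with $M(t)$ on the small window $[t-1,t]$. From Lemma~3 with $r=1$ we have $\dot M(s)=M(s)E(X_s)$, hence $M(t)=M(s)\exp\!\bigl(\int_s^t E(X_u)\,du\bigr)$, so $M(s)\ge M(t)\,e^{-K}$ whenever $t-s\le 1$. Substituting this lower bound into the integral gives
\begin{align*}
\Gamma(t)-\Gamma(t-1)\;=\;\int_{t-1}^{t}\frac{E(X_s^2)}{2M(s)}\,ds\;\le\;\frac{K e^{K}}{2\,M(t)}.
\end{align*}

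Finally, dividing by $\Gamma(t)$ yields $\bigl(\Gamma(t)-\Gamma(t-1)\bigr)/\Gamma(t)\le \tfrac{Ke^{K}}{2}\cdot\tfrac{1}{M(t)\Gamma(t)}$, and hypothesis (H11) asserts $M(t)\Gamma(t)\to\infty$, which drives the right-hand side to zero. There is no real obstacle here: the only point that requires care is ensuring the short-interval comparison $M(s)\asymp M(t)$, which is precisely where the uniform bound on $|E(X_s)|$ from (H9) is used. Hypothesis (H10) is not needed for the argument beyond guaranteeing that $\Gamma(t)$ is eventually positive so that the ratio is well-defined.
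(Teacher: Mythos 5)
Your proof is correct and follows essentially the same route as the paper: bound the unit increment of $\Gamma$ by a constant times $1/M$ and invoke (H11). Your explicit comparison $M(s)\ge M(t)e^{-K}$ on $[t-1,t]$ via $\dot M = M\,E(X_t)$ is in fact a welcome refinement, since the paper's mean-value version quietly needs $M(\xi)\Gamma(t)\to\infty$ for $\xi\in(t-1,t)$ rather than literally $M(t)\Gamma(t)\to\infty$.
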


Lemma 6 will be used to take care of some dangling terms from requiring $t \geq s+1$ in Lemma 5.

  \begin{proof}
    A first order approximation of $\Gamma$ gives us
      \begin{align*}
        \Gamma(t) & = \Gamma(t-1) + \frac{E(X_{\xi}^{2})}{2M(\xi)}, \qquad \xi \in (t-1,t) \\
        & \leq \Gamma(t-1) + \frac{C}{M(\xi)}.
      \end{align*}
    Hypothesis (H11) implies that $\frac{C}{M(\xi)}$ is $o(\Gamma(t))$, and the result follows.
  \end{proof}
  
\subsection{Proof of Theorem 7} %%%%%%%%%%%%%%%
\label{Proof of Theorem 7}

As in Theorem 2, Theorem 7 will rely on (4), the preceding lemmas, and a rough
lower bound on $g_{s}''(1-\phi(s,t))$.

  \begin{proof}
  \begin{enumerate}[1.]
    \item Using Lemma 4 and hypothesis (H11) with (4) gives us
        \begin{align*}
          1 \leq \liminf_{t \geq 0} \phi(t)\Gamma(t).
        \end{align*}
    \item We expand $g_{s}''(1-\phi(s,t))$ by its definition:
        \begin{align*}
          g_{s}''(1-\phi(s,t)) = 2b_{1}(s)+6b_{2}(s)(1-\phi(s,t))+...+n(n+1)b_{n}(s)(1-\phi(s,t))^{n-1}.
        \end{align*}
      Consequently, Lemma 5 and hypothesis (H8) imply that there exists a $T$ large enough that
        \begin{align*}
          \inf_{s \geq T} g_{s}''(1-\phi(s,t)) > 0
        \end{align*}
      for $t \geq s+1$. Hypothesis (H9) also implies
        \begin{align*}
          \sup_{s \geq 0} g_{s}''(1-\phi(s,t)) < \infty.
        \end{align*}
      Using hypothesis (H10), Lemma 6, hypothesis (H9), and the lower bound on
      $g_{s}''(1-\phi(s,t))$ in that order, we find for $t>>T$,
        \begin{align*}
          \Gamma(t) & \leq C \int_{T}^{t}\frac{E(X_{s}^{2})}{2M(s)}ds \\
          & \leq C \int_{T}^{t-1}\frac{E(X_{s}^{2})}{2M(s)}ds \\
          & \leq C \int_{T}^{t-1}\frac{g_{s}''(1-\phi(s,t))}{2M(s)}ds \\
          & \leq C \int_{0}^{t} \frac{g_{s}''(1-\phi(s,t))}{2M(s)}ds.
        \end{align*}
      Inserting this into (4) and using hypothesis (H11), we find
        \begin{align*}
          \limsup_{t \geq 0}\phi(t)\Gamma(t) \leq \limsup_{t \geq 0}\frac{C}{\dfrac{1}{M(t)\Gamma(t)}+1} = C.
        \end{align*}
      Finally, we combine this inequality with the one from 1. to find
        \begin{align*}
          1 \leq \liminf_{t \geq 0} \phi(t)\Gamma(t) \leq \limsup_{t \geq 0}\phi(t)\Gamma(t) \leq C.
        \end{align*}
      Hypothesis (H10) then implies that $\phi(t) \to 0$.
  \end{enumerate}
  \end{proof}

\subsection{Proof of Theorem 8} %%%%%%%%%%%%%%%
\label{Proof of Theorem 8}

As with Theorem 3, Theorem 8 relies on a more careful estimate of $g_{s}''(1-\phi(s,t))$ by
splitting the sum in (4) into regions where $\phi(s,t)$ is close to 0 and far away from 0.
Once again, we will show that the region where $\phi(s,t)$ is far away from 0 contributes
negligibly to the integral.

  \begin{proof}
  \begin{enumerate}[1.]
    \item Applying the inequality $\phi(t) \leq \frac{C}{\Gamma(t)}$ to the process started
      at time $s$ gives us
        \begin{align*}
          \phi(s,t) \leq C\frac{1}{\int_{s}^{t}\frac{E(X_{u}^{2})}{2M(u)/M(s)}du}
            = C\frac{1}{M(s)(\Gamma(t)-\Gamma(s))}.
        \end{align*}
      Here we used the fact that $M(u)/M(s)$ is the mean of the process starting at time $s$.
    \item By hypothesis (H9), pick $L$ such that $g_{t}'''(1) \leq L$ for all $t$.
      By hypothesis (H8), pick $T,\eta>0$
      such that $\inf_{t \geq T} g_{t}''(1) > \eta$. Let $\epsilon>0$ and define
        \begin{align*}
          S(t) = \min\{s : \phi(s,t) \geq \epsilon\eta/L\}.
        \end{align*}
      $S(t)$ is the time that divides our regions where $\phi(s,t)$ is small (when $s<S(t)$),
      and where $\phi(s,t)$ is large (when $s \geq S(t)$).
      First note that $S(t) \to \infty$ as $t \to \infty$, because extinction is sure.
      In the case $T<s < S(t)$, the mean value theorem tells us
        \begin{align*}
          g_{s}''(1-\phi(s,t)) & = g_{s}''(1)-g_{s}'''(\xi)\phi(s,t), \qquad \xi \in (1,1-\phi(s,t)) \\
          & \geq g_{s}''(1)-L\phi(s,t) \\
          & = g_{s}''(1)(1-L\phi(s,t)/g_{s}''(1)) \\
          & \geq g_{s}''(1)(1-L\phi(s,t)/\eta) \\
          & \geq (1-\epsilon)g_{s}''(1).
        \end{align*}
    \item At large $t$, (4) and our result in 2. tell us
        \begin{align*}
          \frac{1}{\phi(t)\Gamma(t)} & \geq \frac{1}{\Gamma(t)}\int_{0}^{t}
            \frac{g_{s}''(1-\phi(s,t))}{2M(s)}ds \\
          & \geq \frac{1}{\Gamma(t)}\int_{T}^{S(t)}
            \frac{g_{s}''(1-\phi(s,t))}{2M(s)}ds \\
          & \geq \frac{1}{\Gamma(t)}(1-\epsilon)\int_{T}^{S(t)}\frac{g_{s}''(1)}{2M(s)}ds
        \end{align*}
      Hypothesis (H10) lets us ignore the loss of the $[0,T]$ interval, so Lemma 4 tells us that 
        \begin{align*}
          \frac{1}{\Gamma(t)}(1-\epsilon)\int_{T}^{S(t)}\frac{g_{s}''(1)}{2M(s)}ds
            & = \frac{1}{\Gamma(t)}(1-\epsilon)\Gamma(S(t)) + o\left(\frac{\Gamma(S(t))}{\Gamma(t)}\right) \\
            & = \frac{\Gamma(S(t))}{\Gamma(t)}-\epsilon\frac{\Gamma(S(t))}{\Gamma(t)}
              + o(1)\\
            & \geq \frac{\Gamma(S(t))}{\Gamma(t)}-\epsilon +o(1)\\
            & = (1-\epsilon)-\frac{\Gamma(t)-\Gamma(S(t))}{\Gamma(t)}+o(1).
        \end{align*}
      We want to show that the $\Gamma$ term tends to zero.
    \item Our result in 1. and the definition of $S(t)$ tell us
        \begin{align*}
          \frac{\Gamma(t)-\Gamma(S(t))}{\Gamma(t)} & \leq \frac{C}{M(S(t))\phi(S(t),t)\Gamma(t)} \\
          & \leq \frac{CL}{\epsilon \eta} \frac{1}{M(S(t))\Gamma(t)} \\
          & \leq \frac{CL}{\epsilon \eta}\frac{1}{M(S(t))\Gamma(S(t))},
        \end{align*}
      which tends to zero because $S(t) \to \infty$. Combining this with 3., we find
        \begin{align*}
          \liminf_{t\geq 0} \frac{1}{\phi(t)\Gamma(t)} \geq 1-\epsilon,
        \end{align*}
      which, along with Theorem 7 part 3., tells us
        \begin{align*}
          1 \leq \liminf_{t \geq 0} \phi(t)\Gamma(t) \leq \limsup_{t \geq 0} \phi(t)\Gamma(t) \leq \frac{1}{1-\epsilon},
        \end{align*}
      for every $\epsilon>0$, completing the proof.
  \end{enumerate}
  \end{proof}

\subsection{Proof of Theorem 9} %%%%%%%%%%%%%%%
\label{Proof of Theorem 9}
In Theorem 9 we depart more significantly from the discrete setting. We still prove the claim
by induction, but now we rely on the differential formula of Lemma 3.

  \begin{proof}
  \begin{enumerate}
    \item We proceed by induction. The base case $r=1$ is clear. Now assume that
        \begin{align*}
          \frac{M_{j}(t)}{M^{j}(t)} = j!\Gamma(t)^{j-1}+o(\Gamma(t)^{j-1}) \qquad (j=1,...,r-1).
        \end{align*}
      Recall from Lemma 1 that $\dot{M} = ME(X_{t})$. Using Lemma 1, we find
        \begin{align*}
          (M_{r}/M^{r})'(t) & = \frac{1}{M^{r}}\sum_{j=1}^{r}\binom{r}{j}M_{r-j+1}E(X_{t}^{j})
            -r\frac{M_{r}}{M^{r+1}}\dot{M} \\
          & = \frac{1}{M^{r}}\sum_{j=1}^{r}\binom{r}{j}M_{r-j+1}E(X_{t}^{j})
            -\frac{1}{M^{r}}rM_{r}E(X_{t}) \\
          & = \frac{1}{M^{r}}\sum_{j=2}^{r}\binom{r}{j}M_{r-j+1}E(X_{t}^{j}).
        \end{align*}
      Then our inductive hypothesis tells us
        \begin{align*}
          (M_{r}/M^{r})'(t) & = \sum_{j=2}^{r}\binom{r}{j}E(X_{t}^{j})(r-j+1)! \frac{\Gamma(t)^{r-j}}{M(t)^{j-1}}
            + o\left(\frac{\Gamma(t)^{r-j}}{M(t)^{j-1}}\right).
        \end{align*}
      Hypothesis (H11) tells us that the term according to $j=2$ dominates the sum
      at large times, and thus
        \begin{align*}
          (M_{r}/M^{r})'(t) & = \frac{r(r-1)}{2} E(X_{t}^{2})\frac{(r-1)!\Gamma(t)^{r-2}}{M(t)}
            + o\left(\frac{\Gamma(t)^{r-2}}{M(t)}\right) \\
          & = r!(r-1)\Gamma(t)^{r-2}\frac{E(X_{t}^{2})}{2M(t)}.
        \end{align*}
  \item On the other hand, differentiating $\Gamma(t)^{r-1}$ yields
      \begin{align*}
        (\Gamma^{r-1})'(t) & = (r-1)\Gamma(t)^{r-2}\dot{\Gamma}(t) \\
        & = (r-1)\Gamma(t)^{r-2}\frac{E(X_{t}^{2})}{2M(t)}.
      \end{align*}
    Combining this with 1., we find
      \begin{align*}
        (M_{r}/M^{r})'(t) = r!(\Gamma^{r-1})'(t) + o((\Gamma^{r-1})'(t)),
      \end{align*}
    and integrating both sides yields the desired result.
  \end{enumerate}
  \end{proof}

\subsection{Proof of Theorem 6} %%%%%%%%%%%%%%%
\label{Proof of Theorem 6}
As in the discrete setting, proving the main theorem is now just a matter
of putting together the pieces of the supporting theorems.

  \begin{proof}
    The moments of the conditioned, scaled process can be written as
      \begin{align*}
        E\left(\left(\frac{Z_{t}}{E(Z_{t}|Z_{t} \ne 0)}\right)^{r}\middle|Z_{t} \ne 0\right)
          & = \frac{E(Z_{t}^{r}|Z_{t} \ne 0)}{E(Z_{t}|Z_{t} \ne 0)^{r}} \\
        & = \frac{M_{r}(t)/\phi(t)}{(M(t)/\phi(t))^{r}} \\
        & = \frac{M_{r}(t)\phi(t)^{r-1}}{M(t)^{r}}.
      \end{align*}
    Theorems 8 and 9 tell us
      \begin{align*}
        \lim_{t \to \infty} \frac{M_{r}(t)\phi(t)^{r-1}}{M(t)^{r}}
          = \lim_{t \to \infty} \frac{M_{r}(t)}{M(t)^{r}\Gamma(t)^{r-1}} = r!.
      \end{align*}
    Thus the $r$-th moment of the conditioned, scaled process tends to $r!$ and so
    the process converges to an exponential with parameter 1 in moments.
  \end{proof}

\subsection{Proof of Theorem 10} %%%%%%%%%%%%%%%
\label{Proof of Theorem 10}

  \begin{proof}
    First we calculate
      \begin{align*}
        \int_{0}^{t} \frac{E(X_{s})}{M(s)}ds = -\int_{0}^{t}\frac{d}{ds} \frac{1}{M(s)}ds
          = 1-\frac{1}{M(t)}.
      \end{align*}
    Thus, $M(t) = \left(1-\int_{0}^{t}\frac{E(X_{s})}{M(s)}ds\right)^{-1}$. Let $\epsilon>0$
    and pick $T$ large enough that $|E(X_{t})| < \epsilon$ for all $t \geq T$. Then using
    hypotheses (H8) and (H9) to bound $E(X_{t}^{2})$ from below and above, when $t \geq T$ we calculate
      \begin{align*}
        \frac{1}{\Gamma(t)M(t)} & \leq \frac{1}{\Gamma(t)}
          + C\frac{\int_{0}^{t}\frac{|E(X_{s})|}{M(s)}ds}{\int_{0}^{t}\frac{1}{M(s)}ds} \\
        & \leq \frac{1}{\Gamma(t)} + \frac{C\int_{0}^{T}\frac{E(X_{s})}{M(s)}ds}{\int_{0}^{t}\frac{1}{M(s)}ds}
          + \frac{C\int_{T}^{t}\frac{E(X_{s})}{M(s)}ds}{\int_{T}^{t}\frac{1}{M(s)}ds} \\
        & \leq \frac{1}{\Gamma(t)} + \frac{C}{\Gamma(t)}
          + C\epsilon.
      \end{align*}
    The first two terms tend to zero by hypothesis (H10), and the result follows.
  \end{proof}

\end{document}